\documentclass[a4paper,12pt]{amsart}
\usepackage[utf8]{inputenc}
\usepackage[T1]{fontenc}
\usepackage[UKenglish]{babel}
\usepackage[margin=18mm]{geometry}

\usepackage{color}



\usepackage{graphicx}

\usepackage{amsmath,amssymb,amsfonts,amsthm}
\usepackage{mathrsfs,eucal,dsfont}
\usepackage{verbatim,enumitem}

\usepackage{hyperref,url}



\newcommand{\R}{\mathds R}

\newcommand{\I}{\mathds 1}

\def\aa{\alpha}

\def\d{{\rm d}}
\def\<{\langle}
\def\>{\rangle}

 \def\ss{\sqrt}

\def\R{\mathbb R}   \def\ss{\sqrt} 
 \def\kk{\kappa} 
  \def\vv{\varepsilon} 
\def\<{\langle} \def\>{\rangle}  
  \def\nn{\nabla}  
\def\d{\text{\rm{d}}}  \def\aa{\alpha} 
  \def\si{\sigma} 
 \def\beq{\begin{equation}}  
 
\def\e{\text{\rm{e}}}  \def\OO{\Omega}  
  
 \def\P{\mathbb P}

\def\E{\mathbb E}

\def\to{\rightarrow}
\def\8{\infty}\def\3{\triangle}
\def\1{\lesssim}

\renewcommand{\bar}{\overline}
\renewcommand{\hat}{\widehat}
\renewcommand{\tilde}{\widetilde}

\newtheorem{theorem}{Theorem}[section]
\newtheorem{lemma}[theorem]{Lemma}
\newtheorem{proposition}[theorem]{Proposition}

\theoremstyle{definition}

\newtheorem{remark}[theorem]{Remark}

\numberwithin{equation}{section}
\begin{document}
\allowdisplaybreaks

\title[McKean-Vlasov SDEs with common noise] {Long time behavior  of one-dimensional McKean-Vlasov SDEs with common noise}

\author{
Jianhai Bao\qquad
Jian Wang}
\date{}
\thanks{\emph{J.\ Bao:} Center for Applied Mathematics, Tianjin University, 300072  Tianjin, P.R. China. \url{jianhaibao@tju.edu.cn}}

\thanks{\emph{J.\ Wang:}
School  of Mathematics and Statistics \& Key Laboratory of Analytical Mathematics and Applications (Ministry of Education) \& Fujian Provincial Key Laboratory
of Statistics and Artificial Intelligence, Fujian Normal University, 350007 Fuzhou, P.R. China. \url{jianwang@fjnu.edu.cn}}

\maketitle

\begin{abstract}
In this paper, by introducing  a new type  asymptotic coupling by reflection,  we explore  the long time behavior of random probability measure flows associated with a
large class of one-dimensional McKean-Vlasov SDEs with common noise.  Concerning the  McKean-Vlasov SDEs with common noise under consideration in the present work, in contrast to the existing literature,
 the drift terms are much more general rather than of the convolution form, and, in particular, can be of polynomial growth
 with respect to the spatial variables,
 and moreover idiosyncratic noises are allowed to be of multiplicative type. Most importantly, our main result indicates that both the common noise and the idiosyncratic noise  facilitate the exponential contractivity of the associated measure-valued
  processes.

\medskip

\noindent\textbf{Keywords:} McKean-Vlasov SDEs with common noise; long time behavior; exponential contractivity; asymptotic coupling by reflection

\smallskip

\noindent \textbf{MSC 2020:} 60H10, 35Q84, 60J60.
\end{abstract}
\section{Introduction and main result}
\subsection{Background}
Consider a mean-field game model with $N$ particles evolving in $\R^d$:
\begin{align}\label{EW0}
\d X_t^i=b(X_t^i,\hat\mu_t^N)\d t+\si  (X_t^i,\hat\mu_t^N)\d B_t^i,\quad i=1,\cdots,N,
\end{align}
where $\hat\mu_t^N:=\frac{1}{N}\sum_{j=1}^N\delta_{X_t^j}$ (the empirical measure of all particles) and $B^1:=(B_t^1)_{t\ge0}$, $\cdots,$
$B^N:=(B_t^N)_{t\ge0}$ are  independent $d$-dimensional  Brownian motions on some complete filtered probability space. In \eqref{EW0}, $(B^1,\cdots,B^N)$ is referred to as an idiosyncratic noise (independent from one individual to another). As we know,  the classical theory on
  propagation of chaos (see e.g. \cite{Sznitman}) demonstrates that
all individual particles   become asymptotically independent  when $N\to\8$. So,     the random   probability measure $\hat\mu_t^N$   converges to a deterministic distribution   and moreover the resulting state of a single particle is
 described by the McKean-Vlasov SDE:
\begin{align}\label{EW1}
\d X_t=b(X_t,\mu_t)\d t+\sigma(X_t,\mu_t)\d B_t,
\end{align}
where $\mu_t:=\mathscr L_{X_t}$ stands for the law of $X_t$ and $(B_t)_{t\ge0}$ is a $d$-dimensional Brownian motion. Initially,
the McKean-Vlasov SDE \eqref{EW1}  was introduced to explore nonlinear Fokker-Planck equations (FPEs for brevity) based on Kac's foundations of kinetic theory \cite{Kac}.   In the past few decades, as far as  McKean-Vlasov SDEs are concerned, significant advancements have been made  on
behaviours in a finite-time horizon (e.g. strong/weak well-posedness \cite{dST,HSS,HW,Wangc} and numerical approximations \cite{dES,dSTT}) and
long-time asymptotics (e.g.  ergodicity \cite{But,Eberle,LMW,Wanga,Wangb} and uniform-in-time propagation of chaos \cite{CST,DEGZ,GLM,GM,Schuh}).

 Nevertheless, in some circumstances,   the individual particles of a mean-field game model are subject to not only idiosyncratic noises  but also
 random shocks common to all particles. On this occasion, the evolution of underlying particles cannot be modelled by \eqref{EW0} any more, and in turn  is characterized by the following mean-field SDEs:
  \begin{align}\label{EW4}
\d X_t^i=b(X_t^i,\hat\mu_t^N)\d t+\si  (X_t^i,\hat\mu_t^N)\d B_t^i+\si_0  (X_t^i,\hat\mu_t^N)\d W_t\,\quad i=1,\cdots,N,
\end{align}
 where the quantities $(b,\sigma, \hat\mu_t^N)$ and $(B^1,\cdots,B^N)$ are defined  as in \eqref{EW0}, and $(W_t)_{t\ge0}$ is a $d$-dimensional Brownian motion.
In \eqref{EW4},  $(B^1,\cdots,B^N)$   is called an idiosyncratic noise (or  individual noise)   as in  \eqref{EW0}, and $(W_t)_{t\ge0}$ is named as a common noise, which accounts for the common environment where the individual particles survive.
 In the aforementioned setting, all particles are not asymptotically independent any more  and the random empirical measure related to particles no longer  converges to a deterministic distribution as the particle number goes to infinity.
 Whereas,   the phenomenon on conditional propagation of chaos  (see e.g. \cite[Theorem 2.12]{CD2})
 illustrates that
 all particles are asymptotically  independent and the corresponding empirical distribution converges to   the common conditional distribution of each particle  conditioned on the $\sigma$-algebra  associated with the common noise.  Moreover, the subsequent limiting state of each particle can be governed by the McKean-Vlasov SDE with common noise:
\begin{align}\label{EW2}
\d X_t=b(X_t,\mu_t)\d t+\sigma(X_t,\mu_t)\d B_t+\si_0(X_t,\mu_t)\d W_t,
\end{align}
where $\mu_t:=\mathscr L_{X_t|\mathscr F_t^W}$ (the  conditional distribution given the $\sigma$-algebra $\mathscr F_t^W:=\sigma\{W_s:s\le t\}$);  $(B_t)_{t\ge0}$ and $(W_t)_{t\ge0}$ are $d$-dimensional Brownian motions. So far,  McKean-Vlasov SDEs with common noise, which, in the literature, are also called conditional McKean-Vlasov SDEs (see e.g. \cite[Chapter 2]{CD2}),   have been applied considerably  in   stochastic optimal   control and mean-field games \cite{CD2,Pham},  and
inter-bank borrowing and lending systems \cite{BLY,LS}, to name a few.  In fact, the conditional McKean-Vlasov SDE \eqref{EW2}  arises from  many practical applications  as shown in e.g. \cite{NYH,Wangd}. In detail,
in order  to construct diffusion processes generated by second order differentiable operators on the Wasserstein space, F.-Y. Wang \cite{Wangd} introduced an   image dependent SDEs, which can  indeed be reformulated  as a special conditional  McKean-Vlasov SDE  (with $\si\equiv0$ in \eqref{EW2}).  Moreover,  in \cite{NYH}, the authors explored   a mean-field game problem
with $N$ players in a random  environment, which is delineated by a continuous-time Markov chain in lieu of the usual diffusions. In particular,
they confirmed  that the associated mean-field limit process solves a conditional McKean-Vlasov SDE, where the Markov chain involved acts as the common noise.

In contrast to McKean-Vlasov SDEs without common noise, the research on McKean-Vlasov SDEs with common noise is not too rich.  Yet, in the past few years, there are still some progresses on qualitative and quantitative analyses; see, for example, \cite{BLM,HSSb,STW} on well-posedness, and \cite{CD2,ELL,Huang,STW} concerned with
finite-time conditional propagation of chaos.  According to \cite[p.\ 110-112]{CD2}, the random distribution flow $(\mu_t)_{t>0}$ associated with \eqref{EW4} solves the nonlinear  FPE:
\begin{equation}\label{EW3}
\d\mu_t=\Big(-\mbox{div}\big( b(\cdot,\mu_t)\mu_t \big)
+\frac{1}{2}\mbox{trace}\big(\nn^2((\si\si^*)(\cdot,\mu_t)\mu_t)\big)-\mbox{div}\big(\big(\si_0(\cdot,\mu_t)\d W_t\big)\mu_t\big)\Big)\d t,
\end{equation}
which is understood  in the weak sense. With regard to the well-posedness of \eqref{EW3}, we   refer to e.g. \cite{CG,FG,LPS} and references within.
Recently, via establishing superposition principles,
\cite{LSZ}  built a one-to-one correspondence between the conditional McKean-Vlasov SDE \eqref{EW2} and the stochastic FPE \eqref{EW3}.
Moreover,  the stochastic PDE \eqref{EW3} is also  linked closely to  the stochastic scalar conservation laws in the Stratonovich form as demonstrated in \cite[Appendix]{CG}. Herein, we would like to mention \cite{DWZZ}, where   Freidlin–Wentzell-type large deviation principles were addressed via the weak convergence approach,  for first-order scalar conservation laws with stochastic forcing.
  Based on the point of view above,  the research on
  the   long-time   behavior  of the random distribution flow corresponding to \eqref{EW2} amounts to the investigation on long-term asymptotics of certain kinds of stochastic FPEs   and stochastic scalar conservation laws.

No matter what the conditional McKean-Vlasov SDE \eqref{EW2} or the nonlinear FPE \eqref{EW3}, most of the existing literature (mentioned above) focuses on finite-time behaviours (e.g. well-posedness and conditional propagation of chaos).
  Nevertheless, the   asymptotic analysis in an infinite-time horizon is extremely rare.
 By comparing \eqref{EW1} with \eqref{EW2}, one of
 remarkable distinctness between them lies in that the deterministic   flow $(\mu_t)_{t>0}$ in \eqref{EW1} satisfies a deterministic nonlinear FPE whereas the random counterpart in \eqref{EW2}   fulfils a stochastic nonlinear FPE. This essential discrepancy
brings about major  challenges to tackle  the long-time behavior of the measure-valued process $(\mu_t)_{t>0}$ solving \eqref{EW3}.

  With regarding to \eqref{EW2} with $\si=0$, \cite{Wangd} treated the exponential ergodicity of the Markov process $(X_t,\mu_t)_{t\ge0}$ provided that  the drift $b$
is globally dissipative with respect to the spatial variables.
Furthermore,  for a special form of
\eqref{EW2} (or \eqref{EW3}), \cite{Maillet} tackled  the long-term distribution asymptotics for  the  conditional McKean-Vlasov SDE on $\R$:
\begin{align}\label{EW5}
\d X_t=-\bigg(V'(X_t) +\int_\R W'(X_t-y)\mu_t(\d y)\bigg)\d t+\si\d B_t+\si_0\d W_t,
\end{align}
where $\mu_t:=\mathscr L_{X_t|\mathscr F_t^{W}}$, $\si,\si_0\in\R$, and $(B_t)_{t\ge0}$ and $(W_t)_{t\ge0}$ are $1$-dimensional Brownian motions.
In \cite{Maillet}, by  designing  an approximate reflection coupling with respect to the common noise part,
the exponential ergodicity of the measure-valued process $(\mu_t)_{t>0}$ under $L^1$-Wasserstein distance was investigated. We emphasize that
$V'$ and $W'$ in \eqref{EW5} are imposed to be globally Lipschitz, and moreover that   the initial distribution of $X_0$ is required  to have a finite {\it fourth order} moment. Generally speaking,  it is quite natural to assume that the initial distribution has a finite first order moment. Whence, the requirement on a finite {\it fourth order} moment concerning the initial distribution  is very strict when the ergodicity of $(\mu_t)_{t>0}$ is discussed under the $L^1$-Wasserstein distance. Moreover,  as shown in \cite{Maillet},
the common noise part is beneficial to  ergodicity  and restoration of uniqueness on invariant probability measures for the measure-valued process $(\mu_t)_{t>0}$ whenever the intensity of the idiosyncratic noise is small enough. Regarding the idiosyncratic noise term, the synchronous coupling was applied therein so no contributions were made on the ergodic behavior of $(\mu_t)_{t>0}$ even though the intensity of the idiosyncratic noise was big enough.

\subsection{Main result}
Inspired by the aforementioned literature, in the present work, we make an attempt to  investigate the ergodic property of the measure-valued process $(\mu_t)_{t>0}$ associated with the following conditional McKean-Vlasov SDE on $\R$:
\begin{equation}\label{E1}
\d X_t= b(X_t,\mu_t) \,\d t+  \si(X_t) \,\d   B_t+\si_0\,\d W_t.
\end{equation}
Herein,  $$b :\R \otimes\mathscr P(\R)\to\R, \quad \si :\R\to\R ,   \quad \si_0\in\R, $$ where  $\mathscr P(\R)$ is   the family of probability measures on $\R$;
$(B_t)_{t\ge0}$ and $(W_t)_{t\ge0}$ are $1$-dimensional Brownian motions, where the  corresponding  probability spaces will be specified  explicitly later;
$\mu_t:=\mathscr L_{X_t|\mathscr F_t^{W}}$ is the regular
conditional distribution of $X_t$ given the $\sigma$-algebra $\mathscr F_t^{W}$. We assume that the initial value $X_0$ is an $\mathscr F_0^1$-measurable random variable, so $(W_t)_{t\ge0}$ is the solely  common source of noise.

Regarding the goal  on  exponential ergodicity under the $L^1$-Wasserstein distance for the measure-valued Markov process $(\mu_t)_{t>0}$ corresponding to \eqref{E1},
 we shall
\begin{itemize}
\item allow   the drift $b$ to be much more general rather than of the convolution form, and to be of polynomial growth, and permit the idiosyncratic noise to be of multiplicative type;

\item require the initial distribution to admit  a finite first order moment instead of higher order moments;

\item establish a novel asymptotic coupling by reflection, which is not only applied to the common noise part but also to the   idiosyncratic noise, so that  the idiosyncratic noise  can also make contributions to the ergodic behavior of $(\mu_t)_{t>0}$.
\end{itemize}
The preceding highlights are the important  source  impelling us to carry out the present work and can also be regarded as the
 main contributions of the whole paper.

To proceed, we write   down the underlying probability space we are going to work  on
and introduce some notations.
Let $(\OO^1, \mathscr F^1, \P^1)$ and $(\OO^0, \mathscr F^0,\P^0)$ be complete probability spaces with the respective filtrations $(\mathscr F_t^1)_{t\ge0}$ and $(\mathscr F_t^0)_{t\ge0}$, which satisfy the usual  hypotheses. Write
 $(B_t)_{t\ge0}$ and   $(W_t)_{t\ge0}$    as    $1$-dimensional Brownian motions supported   on   $(\OO^1,\mathscr F^1,  \P^1)$  and
$(\OO^0,\mathscr F^0, \P^0)$, respectively.  Throughout this paper, we focus on the product  probability space $(\OO, \mathscr F, \mathbb F,\P)$,
where $\OO:=\OO^0\times \OO^1$, $(\mathscr F, \P)$ is the completion of $(\mathscr F^0\otimes \mathscr F^1, \P^0\otimes \P^1)$ and $\mathbb F$ is the complete and right-continuous augmentation of $(\mathscr F^0_t\otimes \mathscr F^1_t)_{t\ge0}$.  For any $p>0$,  let $(\mathscr P_p(\R), \mathbb W_p)$ be the Wasserstein space, where
$$\mathscr P_p(\R^d):=\bigg\{\mu\in\mathscr P(\R^d): \mu(|\cdot|^p):=\int_{\R^d}|x|^p\mu(\d x)<\8\bigg\},$$
and the $L^p$-Wasserstein distance is defined by
\begin{align*}
\mathbb W_p(\mu,\nu) =\inf_{\pi\in\mathscr C(\mu,\nu)}\bigg(\int_{\R^d\times\R^d}|x-y|^p\pi(\d x,\d y)\bigg)^{\frac{1}{1\vee p}},\quad \mu,\nu\in\mathscr P_p(\R^d).
\end{align*}
Herein,  $\mathscr C(\mu,\nu)$ is the set of couplings between $\mu$ and $\nu.$

To investigate the ergodic property of the measure-valued process $(\mu_t)_{t>0}$, we
assume that
\begin{enumerate}\it
\item[{\rm(${\bf H}_{b,1}$)}]
$b(\cdot,\delta_{0})$ is continuous and locally bounded on $\R$, and
there exist constants $\lambda_1,\lambda_2,\lambda_3>0$ and $\ell_0\ge1$ such that for all $x,y\in\R $ and $\mu,\nu\in\mathscr P_1(\R)$,
\begin{equation}\label{E4}
\begin{split}
2(x-y) (b(x,\mu)-b(y,\mu)) &\le (\lambda_1+\lambda_2)| x-y |^2\I_{\{| x-y |\le\ell_0\}} -\lambda_2| x-y |^2
\end{split}
\end{equation}
and
\begin{align}\label{EE1}
|b(x,\mu)-b(x,\nu)|\le \lambda_3\mathbb W_1(\mu,\nu).
\end{align}

\item[{\rm(${\bf H}_{b,2}$)}]
 for any conditionally independent and identically distributed   $(X^i_t)_{1\le i\le N}$ under the filtration $\mathscr F_t^W$,
 there exists a function $\varphi:[0,\8)\to[0,\8)$ with $\lim_{r\to\8}\varphi(r)=0$ such that
\begin{align}\label{E23}
\max_{1\le i\le N}\sup_{t\ge0}\E|b(X^i_t,\mu_t^i)-b(X^i_t,\tilde \mu^{N,i}_t)| \le \varphi(N),
\end{align}
 where $\mu_t^i:=\mathscr L_{X_t^i|\mathscr F_t^W}$ and $\tilde \mu^{N,i}_t:=\frac{1}{N-1}\sum_{j=1:j\neq i}^N\delta_{X^j_t}$.

\item[{\rm(${\bf H}_\sigma$)}]  there exist   constants   $L_\sigma,\kk_{\sigma,1},\kk_{\sigma,2}>0$    such that for all $x,y\in\R,$
\begin{equation*}
 |\si(x)-\si(y) | \le L_\sigma  |x-y|,\qquad \kk_{\sigma,1}\le \si(x)^2\le \kk_{\sigma,2}.
\end{equation*}
\end{enumerate}

 In recent years, the strong well-posedness of conditional McKean-Vlasov SDEs has been treated in various scenarios provided that  the drift and diffusion terms are continuous under the $L^2$-Wasserstein distance with respect to the measure arguments; see e.g. \cite[Proposition 2.8]{CD2}  under globally Lipschitz continuity   and \cite[Theorem 2.1]{CNRS} when the underlying coefficients are of superlinear growth.  Under (${\bf H}_{b,1}$) and (${\bf H}_\sigma$), via  the fixed point iteration method  adopted in \cite[Theorem 2.1]{CNRS}, the SDE \eqref{E1} is strongly well-posed (even for the multidimensional setting, i.e., $d\ge2$),
  where the drift term involved is uniformly continuous under the $L^1$-Wasserstein distance.

  Below, we make some comments concerned with  the Assumptions (${\bf H}_{b,1}$), (${\bf H}_{b,2}$) and (${\bf H}_\sigma$).
  \begin{remark}
The condition \eqref{E4} shows that the drift $b$ is dissipative in the long distance with respect to the spatial variables. \eqref{EE1} demonstrates that $b$ is uniformly continuous under the $L^1$-Wasserstein distance. This, in addition to  (${\bf H}_{b,2}$), will be used in handling the asymptotic  propagation of chaos
in an infinite-time horizon
(see Proposition \ref{pro3} below for more details). In the Appendix section, we further provide a sufficiency to guarantee (${\bf H}_{b,2}$).  Moreover, the non-degenerate property of $\si$ plays a crucial role in constructing the asymptotic coupling by reflection, as stated in the second paragraph of Section \ref{sec3}.
\end{remark}

Before we present the main result, we introduce some additional notation.
Let for $p\ge1,$
\begin{align*}
L_p(\mathscr P(\R^d)):=\bigg\{\mu\in\mathscr P(\mathscr P(\R^d)):\int_{\mathscr P(\R^d)}\mathbb W_p(\nu,\delta_0)^p\mu(\d\nu)=\int_{\mathscr P(\R^d)}\int_{\R^d}|x|^p\nu(\d x)\mu(\d\nu)<\8\bigg\}
\end{align*}
and
\begin{align*}
\mathcal W_p(\mu,\nu):=\inf_{\pi\in\mathscr C(\mu,\nu)}\int_{\mathscr P(\R^d)\times\mathscr P(\R^d)}\mathbb W_p(\tilde \mu,\tilde\nu)\pi(\d\tilde\mu,\d\tilde\nu),\quad \mu,\nu\in L_p(\mathscr P(\R^d)).
\end{align*}

Let $(X_t)_{t\ge0}$ be the unique strong solution to the SDE \eqref{E1} so that the distribution of the starting point $X_0$ is $\mu\in \mathscr P_1(\R)$. For every $t>0$, let $\mu_t =\mathscr L_{X_t|\mathscr F_t^{W}}$ be the regular
conditional distribution of $X_t$ given the $\sigma$-algebra $\mathscr F_t^{W}$.
The main result in this paper is stated as follows.
\begin{theorem}\label{thm1}
Assume $({\bf H}_{b,1})$,    $({\bf H}_{b,2})$,  and $({\bf H}_\sigma)$.
Then, there exists a constant $\lambda_3^*>0$ such that for
all $\lambda_3\in[0,\lambda_3^*]$, there are constants $C,\lambda>0$ so that for all  $t>0$  and $\mu,\nu\in
\mathscr{P}_1(\R)$,
\begin{equation}\label{EE8}
\mathcal W_1(\mu_t,\nu_t)\le C\e^{-\lambda t}
\mathbb W_1(\mu,\nu).
\end{equation}
\end{theorem}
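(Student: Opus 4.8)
The plan is to prove the estimate \eqref{EE8} by combining an asymptotic (approximate) reflection coupling for the two-point motion with a uniform-in-time propagation of chaos argument, working at the level of the conditional laws $\mu_t,\nu_t$, which are themselves random elements of $\mathscr P_1(\R)$. First I would fix two initial laws $\mu,\nu\in\mathscr P_1(\R)$ and build, on the product space $(\OO,\mathscr F,\mathbb F,\P)$, a coupling $(X_t,Y_t)_{t\ge0}$ of two solutions of \eqref{E1} driven by the \emph{same} common noise $W$ but by idiosyncratic noises that are coupled by a reflection that is "switched off" smoothly when the two particles are far apart or close together — i.e. a Lipschitz interpolation between reflection coupling (on $\{|X-Y|\le \ell_0\}$, say) and synchronous coupling, exactly of the type alluded to in the paragraph preceding Section~\ref{sec3}. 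The non-degeneracy $\kk_{\sigma,1}\le\si^2\le\kk_{\sigma,2}$ from $({\bf H}_\sigma)$ guarantees that $r_t:=|X_t-Y_t|$ acquires a genuine elliptic term from the idiosyncratic noise, so that both noises contribute to contraction; the common-noise terms cancel since they are identical for $X$ and $Y$.

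The second step is the construction of a concave distance function $f\in C^2$, increasing, with $f(0)=0$, $f'$ bounded above and below by positive constants on $[0,\ell_0]$ and constant thereafter, designed so that (using \eqref{E4} for the drift, the Lipschitz and ellipticity bounds in $({\bf H}_\sigma)$ for the diffusion, and \eqref{EE1} to absorb the measure-dependence into a $\lambda_3\mathbb W_1(\mu_t,\nu_t)$ term) the Itô–Tanaka formula for $f(r_t)$ yields, after taking expectations,
\begin{equation}\label{plan1}
\frac{\d}{\d t}\E f(r_t)\le -c\,\E f(r_t)+C\lambda_3\,\E\,\mathbb W_1(\mu_t,\nu_t)
\end{equation}
for some $c>0$ once $\lambda_3$ is small; this is the classical Eberle-type construction adapted to the present one-dimensional multiplicative-noise setting. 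Here the reflection part handles the short-range region where the drift may be only one-sidedly bounded, and the long-range dissipativity $-\lambda_2|x-y|^2$ handles the tail. Because $f$ is comparable to the identity, \eqref{plan1} upgrades to an exponential bound on $\E|X_t-Y_t|$, hence on $\E\,\mathbb W_1(\mu_t,\nu_t)$, \emph{provided} we can close the feedback loop: the term $\E\,\mathbb W_1(\mu_t,\nu_t)$ on the right must itself be controlled. This is done by noting $\mathbb W_1(\mu_t,\nu_t)\le \E^{W}|X_t-Y_t|$ and iterating/Grönwall, or, cleanly, by running the coupling for the \emph{conditional} marginals and observing $\mathcal W_1(\mu_t,\nu_t)\le \E|X_t-Y_t|$ while $\mathbb W_1(\mu_t,\nu_t)$ is pathwise dominated — so that a single Grönwall inequality in $\E f(r_t)$ with the small coefficient $\lambda_3$ yields the decay, choosing $\lambda_3^*$ so that $C\lambda_3^*<c$.

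The remaining, and I expect principal, difficulty is that $\mu_t=\mathscr L_{X_t|\mathscr F_t^W}$ is a \emph{conditional} law, so the coupled pair $(X_t,Y_t)$ does not by itself see the right measure arguments $\mu_t,\nu_t$ inside the drift $b$; one cannot simply plug a coupling of $X,Y$ into \eqref{E1} because the nonlinearity is through the conditional law of the solution itself, not of the coupled process. The standard remedy, and the one I would follow, is to go through the $N$-particle system \eqref{EW4} with common noise $W$: couple two such $N$-systems $(X^i)$ and $(Y^i)$ by applying the above asymptotic reflection coupling \emph{componentwise} with a shared $W$ and independent idiosyncratic $B^i$'s, obtain a per-particle contraction with an $O(\varphi(N))$ error coming from replacing $b(X^i_t,\mu^i_t)$ by $b(X^i_t,\tilde\mu^{N,i}_t)$ via $({\bf H}_{b,2})$ and \eqref{EE1}, then pass to the limit $N\to\infty$ using conditional propagation of chaos (Proposition~\ref{pro3}) to recover \eqref{E1} and its conditional law. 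Thus the logical order is: (i) set up the componentwise asymptotic reflection coupling for the $N$-particle system; (ii) derive the Itô estimate \eqref{plan1} with an extra $+C\varphi(N)$ on the right; (iii) Grönwall to get $\E\,\mathbb W_1(\mu^{1,N}_t,\nu^{1,N}_t)\le Ce^{-\lambda t}\mathbb W_1(\mu,\nu)+C\varphi(N)/\lambda$ uniformly in $t$; (iv) send $N\to\infty$, using uniform-in-$N$ moment bounds (finite first moment of the initial law suffices, thanks to \eqref{E4}) and the propagation-of-chaos estimate, to kill the $\varphi(N)$ term and arrive at \eqref{EE8}. The delicate points are the uniform-in-time, uniform-in-$N$ moment estimates under only a first-moment assumption — handled by the long-range dissipativity in \eqref{E4} together with boundedness of $\si,\si_0$ — and verifying that the constants $c$, hence $\lambda_3^*$, in the coupling estimate are genuinely $N$-independent.
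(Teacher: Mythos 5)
Your overall architecture --- work through particle systems, run an asymptotic reflection coupling, use an Eberle-type concave function plus Gr\"onwall with $\lambda_3$ small, and pass $N\to\infty$ at fixed $t$ via conditional propagation of chaos --- matches the paper's, but the central construction differs and, as written, has two genuine gaps. The paper does \emph{not} couple two copies of the same system: it couples the non-interacting system \eqref{E6} with the interacting system \eqref{E7}, keeps every noise of the non-interacting system untouched (this is precisely what preserves the identification $\mu_t=\mathscr L_{X_t|\mathscr F_t^W}$), and places the reflection coefficient, driven by the \emph{averaged} difference $\|{\bf Z}_t^{N,N,\vv}\|_1$ of all particles, on the interacting system's common-noise coefficient $\si_0$ and on an \emph{additive} slice $\si_1=\ss{\alpha\kk_{\si,1}}$ of the idiosyncratic noise extracted through $\si^2=\bar\si_\alpha^2+\alpha\kk_{\si,1}$, while the multiplicative remainder is coupled synchronously. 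This is exactly where $({\bf H}_{b,2})$ enters: the drift of the non-interacting particle contains $\mu_t^i$, and replacing it by the leave-one-out empirical measure costs $\varphi(N)$ uniformly in time (Proposition \ref{pro3}). Your plan instead couples two interacting $N$-systems with a shared $W$ (so the common noise contributes nothing) and componentwise reflection of the full multiplicative idiosyncratic noise; in that setting no $b(X_t^i,\mu_t^i)$ appears anywhere, so the $O(\varphi(N))$ error you attribute to $({\bf H}_{b,2})$ does not arise where you say it does, and your citation of Proposition \ref{pro3} as the propagation-of-chaos input is a mix-up (the finite-time statement needed for the outer terms of the triangle inequality is Proposition \ref{pro0}; Proposition \ref{pro3} is the uniform-in-time coupling estimate). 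A two-interacting-systems variant can plausibly be closed in $d=1$, but then the $\lambda_3$-term couples the particles, so the contraction must be run on the average $\frac1N\sum_{i}\E f(|Z_t^i|)$ rather than ``per particle'', and one forfeits the paper's main point that the common noise also produces contraction.

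The more serious gap is that you never make the ``asymptotic'' coupling rigorous. With the cutoff $h_\vv$, the modified system is, for each $\vv>0$, \emph{not} a copy of the interacting particle system: the coefficient $1-2h_\vv(\cdot)$ lies strictly inside $(-1,1)$ on the transition region, so the reflected drivers are no longer Brownian motions and the marginal law of ${\bf X}^{N,N,\vv}$ differs from that of \eqref{E7}. One must therefore prove tightness of $({\bf X}^{N},{\bf X}^{N,N,\vv})_{\vv>0}$ and identify the $\vv\to0$ weak limit as a genuine coupling through the martingale problem --- this is Lemma \ref{tight} and Proposition \ref{pro1}, a substantial part of the paper, and when the common noise is reflected the identification works only because the reflection coefficient is common to all particles (Remark \ref{remark:coup}). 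Alternatively one may use a marginal-preserving interpolation with auxiliary independent Brownian motions, but then the construction with multiplicative $\si$, and the verification that the $N$ reflected idiosyncratic drivers together with $W$ still form an independent Brownian family, must be spelled out. Your proposal addresses neither route; moreover, your opening construction (reflecting the idiosyncratic noise of a solution of \eqref{E1} itself) leaves unverified that the reflected driver is compatible with $\mathscr F^W$ in the way required for $\nu_t$ to remain the correct conditional law --- which is exactly why the paper refuses to modify the non-interacting system at all.
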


Now, we make an explanation  on the alternative of the initial value $X_0$.
\begin{remark}
  In the present paper, to highlight that the noise part $(W_t)_{t\ge0}$ is the unique common noise, we assume that the initial value $X_0$ is supported on the probability space $(\OO^1, \mathscr F^1_0, \P^1)$. This results in that the right hand side of \eqref{EE8} is
  $\mathbb W_1(\mu,\nu)$ rather than $\mathcal W_1(\mu,\nu)$. When (i) $X_0$ is defined on $(\OO^0, \mathscr F^0_0, \P^0)$
and (ii)  $X_0$ is measurable with respect to $\si(X_0^0,X_0^1)$ with $X_0^0$ and $X_0^1$ being defined on $(\OO^0, \mathscr F^0_0, \P^0)$ and $(\OO^1, \mathscr F^0_1, \P^1)$,
 $\mu_t $ is a version of the conditional law of $X_t$ given $(X_0,W)$ and  $(X_0^0,W)$, respectively; see \cite[Remark 2.10]{CD2} for more discussions on various choices on the initial value for McKean-Vlasov SDEs with common noise. For the case (i), $(X_0,W)$ is called the ``initial condition-common noise''; for the case (ii), $(X_0^0,W)$ plays the role of systemic noise. Concerning both cases (i) and (ii), the term $\mathbb W_1(\mu,\nu)$ on the right hand side of \eqref{EE8} can be replaced by $\mathcal W_1(\mu,\nu)$ so that \eqref{EE8} can be written in a symmetric form, i.e.,  $\mathcal W_1(\mu_t,\nu_t)\le C\e^{-\lambda t}
\mathcal W_1(\mu,\nu)$.
 \end{remark}

To establish the exponential contractivity for McKean-Vlasov SDEs without common noise, which are strongly well-posed,
one usually makes use of their decoupled versions (which are derived by freezing the measure variables). However, this idea does not work for the McKean-Vlasov SDEs with common noise due to the essentially  different roles played by the common noise and the idiosyncratic noise; see, for instance,  \cite[Theorem 2.1]{CNRS} concerning the proof of well-posedness for the McKean-Vlasov SDEs with common noise. Instead, we turn to the non-interacting particle system and the corresponding interacting particle system associated with \eqref{E1}.

The comparison between Theorem \ref{thm1} and the main result for the case $d=1$ in \cite[Section 4]{Maillet} are to be presented in Remark \ref{R:3.6}.
 So far, one might be a little bit confused why we are confined to the $1$-dimensional SDE \eqref{E1} rather than the multi-dimensional version. Below, we go into detail about the corresponding reasons.

\begin{remark} Theorem \ref{thm1} is only concerned with one-dimensional McKean-Vlasov SDEs with common noise. Though one key ingredient for the  proof of Theorem \ref{thm1} is the asymptotic coupling by reflection that can be constructed for all $d\ge1$ (see Proposition \ref{pro1} below), there are several essential difficulties to realize this idea in the high dimensional setting (i.e., $d\ge2$). In particular, for the case  $d\ge2$, the asymptotic coupling by reflection constructed in Subsection \ref{section2.2}  for the interacting particle system associated with the original McKean-Vlasov SDEs with common noise is
determined by the average difference between the component processes (see Remark \ref{remark} below for more details).
With such an asymptotic coupling by reflection,  if the drift term $b$ enjoys a very special structure,
 one can  derive  merely the estimate on the quantity $\E \mathbb W_1(\bar{{\bf X}_t^N},\bar{{\bf X}_t^{N,N}})$, where $\bar{{\bf X}_t^N}$ (resp. $\bar{{\bf X}_t^{N,N}}$) indicates the arithmetic mean of the non-interacting particles
(resp. interacting particles). Furthermore,  by following the line in \cite[Section 5]{Maillet}, to achieve the main result in Theorem \ref{thm1} for the high dimensional setting,
one needs  to quantify the difference between each component of the interacting particle system and its averaged process. To this end, a very strict condition $\si\equiv0$ need to be imposed.
  When the idiosyncratic noise vanishes, that is, the McKean-Vlasov SDEs is only driven by common noise, the corresponding problem has been treated in \cite[Section 5]{Maillet}.
On the other hand, when the coefficients corresponding to  the  McKean-Vlasov SDEs with common noise are dissipative, one can directly apply the synchronous coupling and  bypass the obstacles mentioned above; see also  \cite[Section 3]{Maillet} for the details.
 More interpretations related to  the restriction on the dimension $d=1$ will be elaborated in Remark \ref{remark:coup} and Remark \ref{remark}.
 \end{remark}

\smallskip

The rest of this paper is arranged as follows. In the next section, we establish conditional propagation of chaos in a finite-time horizon for the McKean-Vlasov SDEs with common noise, and construct an asymptotic coupling by reflection for the associated non-interacting particle system and interacting particle system. The results in Section \ref{sec2} hold for general settings (in particular for all dimensions). The asymptotic coupling by reflection
here is new in the sense that we only consider the approximation of the coupling by reflection for the interacting particle system and keep the  non-interacting particle untouched, which ensures the essential characterization of the McKean-Vlasov SDEs with common noise. The price to pay is that one needs some efforts to verify  tightness of
the asymptotic coupling process constructed via
the asymptotic coupling  by reflection.
 Section \ref{sec3} is devoted to the proof of Theorem \ref{thm1}, which is based on an explicit convergence rate of  the asymptotic   conditional propagation of chaos   in the long-time horizon   for the conditional McKean-Vlasov SDE. Most importantly, the  explicit convergence rates indicate satisfactorily that both common noise and idiosyncratic noise   facilitate the exponential contractivity of the associated measure-valued  processes.

\section{Preliminaries}\label{sec2}
 Let $(B_t^1)_{t\ge0}$ and  $(B_t^2)_{t\ge0}$    be    $d$-dimensional Brownian motions defined   on  the complete probability space   $(\OO^1,\mathscr F^1,  \P^1)$, and  $(W_t)_{t\ge0}$ be a     $d$-dimensional Brownian motion     on  the complete probability space
$(\OO^0,\mathscr F^0, \P^0)$.
   $\E$, $\E^0$ and $\E^1$ stand for  the expectation operators under the probability measures $\P:=\P^0\times \P^1,$ $\P^0$ and $\P^1$, respectively.
In this section, we focus on the   McKean-Vlasov SDE with common noise in the following form:
\begin{equation}\label{W1}
\d X_t=b(X_t,\mu_t)\d t+  \si_1\d B^1_t+\bar\si(X_t)\d B_t^2+\si_0\d W_t,
\end{equation}
where
$$b:\R^d\times\mathscr P(\R^d)\to\R^d,\quad \bar\si:\R^d\to\R^d\otimes\R^d,\quad \si_0,\sigma_1\in\R;$$
 $\mu_t:=\mathscr L_{X_t|\mathscr F_t^{W}}$;   the initial value $X_0$ is an $\mathscr F_0^1$-measurable random variable. As the chapter unfolds,
 the reason why we prefer the SDE formulated in the framework  \eqref{W1} will become more and more transparent; see, in particular, the introductory part  of the
 Section \ref{sec3}.

\ \

We shall suppose that
\begin{enumerate} \it
\item[{\rm(${\bf A}_b$)}] $b(\cdot,\delta_0):\R^d  \to\R^d $  is continuous and locally bounded on $\R^d $; there exist     constants  $L_1,L_2 >0$ such that for all $x,y\in\R^d $ and $\mu,\nu\in\mathscr P_1(\R^d)$,
\begin{equation}\label{W2}
2\<x-y, b(x,\mu)-b(y,\mu)\>\le L_1|x-y|^2
\end{equation}
and
\begin{align}\label{W3}
|b(x,\mu)-b(x,\nu)|\le L_2\mathbb W_1(\mu,\nu).
\end{align}

\item[{\rm(${\bf A}_{\bar\sigma}$)}] there exists a constant $L_3>0$ such that
\begin{align}\label{W4}
\|\bar \si(x)-\bar\si  (y )\|_{\rm HS}\le L_3|x-y|,\quad x,y\in\R^d.
\end{align}

\end{enumerate}

Under the Assumptions (${\bf A}_b$) and (${\bf A}_{\bar\sigma}$), for all $x,y\in\R^d $ and $\mu,\nu\in\mathscr P_1(\R^d)$,
\begin{equation}\label{W2--}
2\<x-y, b(x,\mu)-b(y,\nu)\>\le L_4\big(|x-y|+\mathbb W_1(\mu,\nu)\big)|x-y|,\end{equation}
where $L_4:=\max\{L_1,2L_2\}$.  Then,
the SDE \eqref{W1} under consideration has a unique
strong solution; see, for instance, the proof of \cite[Theorem 2.1]{CNRS} for related details.
To handle the theory on propagation of chaos concerned  with \eqref{W1}, we need to explore
the non-interacting particle system and the interacting particle system
 associated with \eqref{W1}, which  are  described respectively as follows: for any $i\in \mathbb S_N:=\{1,2,\cdots,N\}$,
 \begin{equation}\label{E6}
\d X_t^i= b(X_t^i,\mu_t^i) \d t+   \si_1\d B_t^{1,i}+\bar\si(X_t^i) \d   B_t^{2,i}+\si_0\d W_t,
\end{equation}
and
\begin{equation}\label{E7}
\d X_t^{i,N}=b(X_t^{i,N},\hat\mu_t^N) \d t+  \si_1\d B_t^{1,i}+\bar\si(X_t^{i,N}) \d   B_t^{2,i}+\si_0\d W_t,
\end{equation}
where, for each $i\in\mathbb S_N$,
 $ \mu_t^i:=\mathscr L_{X_t^i|\mathscr F_t^{W}} $ and
 $\hat\mu_t^{N}:=\frac{1}{N}\sum_{j=1}^N\delta_{X_t^{j,N}}$, the empirical distribution of the individual states at time $t$; the idiosyncratic noises
 $(B^{1,i})_{i\in \mathbb S_N}$ and $( B^{2,i})_{i\in \mathbb S_N}$,  with $B^{k,i}:=(B_t^{k,i})_{t\ge0}$ for all $k=1,2$ and $i\in  
  \mathbb S_N$, are mutually  independent $d$-dimensional  Brownian motions supported  on the filtration probability space $(\OO^1, \mathscr F^1, (\mathscr F^1_t)_{t\ge0},\P^1)$, and the common noise $(W_t)_{t\ge0}$, carried on the filtration probability space $(\OO^0, \mathscr F^0, (\mathscr F^0_t)_{t\ge0},\P^0)$,  is kept untouched as in \eqref{W1};
 $(X_0^{i}, X_0^{i,N})_{1\le i\le N}$
  are i.i.d.\ $\mathscr F_0^1$-measurable random variables.
 Note that \eqref{E7} can be reformulated as a classical $(\R^{d})^N$-valued SDE.  Since,  under   (${\bf A}_b$) and (${\bf A}_{\bar \sigma}$), the underlying  SDE \eqref{E7} satisfies the so-called locally weak monotonicity and globally weak coercivity, \eqref{E7} is strongly well-posed; see, for instance, \cite[Theorem 3.1.1]{PR}.

\subsection{Conditional propagation of chaos in a finite-time horizon}
Concerning the SDE \eqref{W1}, in this subsection,
we handle the  phenomenon on conditional propagation of chaos in a finite-time horizon. In the past few years, this subject  has achieved  some progresses;
see, for example, \cite[Theorem 2.12]{CD2} and \cite[Theorem 2.3]{Huang}, where the drift term and the diffusion term are Lipschitz continuous  with respect to the spatial variables, and \cite[Proposition 2.1]{CNRS}, in which the coefficients satisfy the monotone condition with respect to the spatial variables. It is worthy to emphasize that the coefficients  of  McKean-Vlasov SDEs with common noise under investigation  in \cite{CD2,CNRS,Huang} are $L^2$-Wasserstein Lipschitz continuous with respect to the measure variables. Yet, in the present  paper, the drift parts of the conditional   McKean-Vlasov SDEs we are interested in are $L^1$-Wasserstein Lipschitz continuous. In particular,
the corresponding convergence rate of conditional propagation of chaos was revealed in \cite{CD2,CNRS,Huang} whenever the initial distributions enjoy high order moments.  As far as we are concerned, the quantitative convergence rate of conditional propagation of chaos is unnecessary for our purpose, so the high order moment  of the  initial distribution is dispensable as showed in the following proposition.

\begin{proposition}\label{pro0} Consider the SDEs \eqref{E6} and \eqref{E7} with $X_0^{i,N}=X_0^i$ for all $1\le i\le N$.
Assume $({\bf A}_b)$ and $({\bf A}_{\bar\si})$, and suppose further that $\E|X_0^1|<\8$.
Then, for each given $t\ge0$ and  $i\in\mathbb S_N,$
\begin{align}\label{E9}
\lim_{N\to\8}\E\mathbb W_1(\mu_t^i,\tilde\mu_t^N)=0,
\end{align}
where $\tilde\mu_s^N:=\frac{1}{N}\sum_{j=1}^N\delta_{X_s^j}$,
and
\begin{align}\label{E13}
\lim_{N\to\8}\E|X_t^i-X_t^{i,N}|=0.
\end{align}
\end{proposition}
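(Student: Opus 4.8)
The plan is to establish the two limits \eqref{E9} and \eqref{E13} simultaneously by a synchronous-coupling (Gronwall) argument combined with a qualitative law-of-large-numbers input. First I would couple \eqref{E6} and \eqref{E7} using the \emph{same} Brownian motions $(B^{1,i},B^{2,i})$ and the same common noise $W$, and set $Z_t^i:=X_t^i-X_t^{i,N}$ with $Z_0^i=0$. Applying It\^o's formula to $|Z_t^i|^2$, the $\si_0\d W_t$ and the $\si_1\d B_t^{1,i}$ terms cancel exactly, the $\bar\si$-difference is controlled by (${\bf A}_{\bar\si}$) via \eqref{W4}, and the drift difference splits as
\begin{align*}
2\<Z_t^i, b(X_t^i,\mu_t^i)-b(X_t^{i,N},\hat\mu_t^N)\>
&=2\<Z_t^i, b(X_t^i,\mu_t^i)-b(X_t^{i,N},\mu_t^i)\>\\
&\quad+2\<Z_t^i, b(X_t^{i,N},\mu_t^i)-b(X_t^{i,N},\hat\mu_t^N)\>.
\end{align*}
The first piece is bounded by $L_1|Z_t^i|^2$ using \eqref{W2}; the second by $L_2|Z_t^i|\,\mathbb W_1(\mu_t^i,\hat\mu_t^N)$ using \eqref{W3}. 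Then I would further decompose $\mathbb W_1(\mu_t^i,\hat\mu_t^N)\le \mathbb W_1(\mu_t^i,\tilde\mu_t^N)+\mathbb W_1(\tilde\mu_t^N,\hat\mu_t^N)$, and note $\mathbb W_1(\tilde\mu_t^N,\hat\mu_t^N)\le \frac1N\sum_{j=1}^N|Z_t^j|$.

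Next I would take expectations, use the martingale property to kill the stochastic integrals (after a standard localization/stopping-time argument, justified by the moment bounds below), and exploit exchangeability of $(X^i,X^{i,N})_{i}$ to reduce to a single scalar function $u(t):=\E|Z_t^1|^2$ — or more conveniently work with $g(t):=\E\big(\frac1N\sum_i |Z_t^i|^2\big)$. After applying Young's inequality to the $L_2$-term, one arrives at a differential inequality of the shape
\begin{align*}
g'(t)\le C_1\, g(t) + C_2\,\Big(\E\,\mathbb W_1(\mu_t^1,\tilde\mu_t^N)^2\Big)^{1/2}\sqrt{g(t)} + \cdots,
\end{align*}
and Gr\"onwall's lemma on $[0,t]$ gives $g(t)\le C(t)\cdot \sup_{s\le t}\E\,\mathbb W_1(\mu_s^1,\tilde\mu_s^N)$ (or its square root), with $C(t)$ finite because $\si$ is bounded and $b(\cdot,\delta_0)$ together with \eqref{W2} yields uniform-in-$N$ first/second moment bounds on $X_t^i$ and $X_t^{i,N}$ via a standard a priori estimate. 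Since \eqref{E9} would already force the driving term to $0$, this delivers \eqref{E13}; and conversely \eqref{E13} feeds back (through $\mathbb W_1(\tilde\mu_t^N,\hat\mu_t^N)\to0$) so the two statements are genuinely coupled.

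The remaining — and genuinely substantive — step is \eqref{E9} itself: showing $\E\,\mathbb W_1(\mu_t^i,\tilde\mu_t^N)\to0$, where $\mu_t^i=\mathscr L_{X_t^i|\mathscr F_t^W}$ is the \emph{conditional} law. This is a conditional law of large numbers: conditionally on $\mathscr F^W$ (equivalently on the common-noise path), the particles $(X_t^i)_{i\ge1}$ from \eqref{E6} are i.i.d.\ with common conditional law $\mu_t^i=\mu_t$ (independent of $i$ by symmetry), so by the Glivenko–Cantelli / Varadarajan theorem $\tilde\mu_t^N\to\mu_t$ weakly, $\P^0$-a.s.; upgrading weak convergence to $\mathbb W_1$-convergence requires uniform integrability, which follows from the uniform-in-$N$ bound $\sup_N\E\,\tilde\mu_t^N(|\cdot|)=\E|X_t^1|<\infty$ (indeed one gets $\sup_N\E(\frac1N\sum_i|X_t^i|^{1+\epsilon})<\infty$ if $\E|X_0^1|^{1+\epsilon}<\infty$, but here a truncation argument keeping only first moments suffices). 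Then $\E\,\mathbb W_1(\mu_t^i,\tilde\mu_t^N)\to0$ by dominated convergence, the dominating function being $\mathbb W_1(\mu_t,\delta_0)+\tilde\mu_t^N(|\cdot|)$ which has bounded expectation. The main obstacle I anticipate is precisely this point — making the conditional i.i.d.\ structure rigorous (the $(X^i)_{i}$ are i.i.d.\ only after conditioning on $\mathscr F^W$, and one must argue that the conditional law is a.s.\ independent of $i$ and can serve as the limit) and handling the non-Lipschitz-in-space nature of $b$, which is why the synchronous coupling is driven by the one-sided estimate \eqref{W2} rather than a global Lipschitz bound. Once \eqref{E9} is in hand, the Gr\"onwall estimate closes \eqref{E13}.
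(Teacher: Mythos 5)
Your overall strategy is the same as the paper's: synchronous coupling of \eqref{E6} and \eqref{E7} with the same $(B^{1,i},B^{2,i},W)$, splitting the drift difference via \eqref{W2} and \eqref{W3}, the triangle inequality $\mathbb W_1(\mu_t^i,\hat\mu_t^N)\le \mathbb W_1(\mu_t^i,\tilde\mu_t^N)+\frac1N\sum_j|Z_t^j|$, exchangeability plus Gronwall for \eqref{E13}, and for \eqref{E9} the conditional i.i.d.\ structure, a.s.\ weak convergence (Varadarajan), the law of large numbers for first moments to upgrade to $\mathbb W_1$-convergence, and dominated convergence. (Incidentally, there is no genuine ``feedback'' of \eqref{E13} into \eqref{E9}: $\tilde\mu_t^N$ is built from the non-interacting particles only, so \eqref{E9} is proved independently and then fed into the Gronwall bound.)

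The genuine gap is in the Gronwall step. You apply It\^o's formula to $|Z_t^i|^2$ and close the estimate by Cauchy--Schwarz, arriving at a differential inequality involving $\bigl(\E\,\mathbb W_1(\mu_t^1,\tilde\mu_t^N)^2\bigr)^{1/2}$. Under the standing hypothesis $\E|X_0^1|<\infty$ you have no control on second moments: $\E|X_t^i|^2$ may be infinite, hence $\E\,\mathbb W_1(\mu_t^1,\tilde\mu_t^N)^2$ need be neither finite nor convergent to $0$ (the qualitative LLN you invoke only delivers the first-moment statement \eqref{E9}), so the displayed inequality cannot be closed as written; the ``truncation argument keeping only first moments'' is asserted but never carried out, and it is exactly where the difficulty sits. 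The paper avoids this entirely by running the whole argument at the first-moment level: It\^o's formula is applied to the smooth approximation $V_\delta(x)=(\delta+|x|^2)^{1/2}$ of $|x|$, for which $|\nabla V_\delta|\le 1$ and the second-order term is absorbed with constant $L_3^2$, yielding (after exchangeability) the linear inequality $\E|Z_t^{i,N}|\le \frac12\int_0^t\bigl((2L_4+L_3^2)\E|Z_s^{i,N}|+L_4\,\E\,\mathbb W_1(\mu_s^i,\tilde\mu_s^N)\bigr)\d s$, to which Gronwall applies directly with only $\E\,\mathbb W_1(\mu_s^i,\tilde\mu_s^N)$ as input. If you replace your $|Z|^2$ computation by this $V_\delta$ (or It\^o--Tanaka) computation, your plan goes through. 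A smaller technical point: your dominating function $\mathbb W_1(\mu_t,\delta_0)+\tilde\mu_t^N(|\cdot|)$ depends on $N$, so plain dominated convergence does not apply; you need either a generalized (Pratt-type) dominated convergence using that $\tilde\mu_t^N(|\cdot|)\to\mu_t(|\cdot|)$ a.s.\ and in $L^1$, or the paper's two-step conditioning, where $\E^1\mathbb W_1(\mu_t^i,\tilde\mu_t^N)\le 2\mu_t^i(|\cdot|)$ serves as an $N$-independent dominating function under $\E^0$.
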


\begin{proof} The proof is split into two parts.

(i) First of all,  we show that for each given $t\ge0$ and $i\in\mathbb S_N,$
\begin{align}\label{EE3}
\E  |Z_t^{i,N}|
&\le   \frac{1}{2}L_4 t\e^{(L_4+L_3^2/2)t} \int_0^t\E\mathbb W_1(\mu_s^i,\tilde\mu_s^N)\,\d s ,
\end{align}
where $Z_t^{i,N}:=X_t^i-X_t^{i,N}$ and $L_4$ was given in \eqref{W2--}. Once \eqref{EE3} is verifiable, by Fatou's lemma, we deduce that
\begin{align*}
\limsup_{N\to\8}\E  |Z_t^{i,N}|&\le    \frac{1}{2}L_4 t\e^{(L_4+L_3^2/2)t}\limsup_{N\to\8}\int_0^t\E\mathbb W_1(\mu_s^i,\tilde\mu_s^N)\,\d s \\
&\le    \frac{1}{2}L_4 t\e^{(L_4+L_3^2/2)t}\int_0^t\limsup_{N\to\8}\E\mathbb W_1(\mu_s^i,\tilde\mu_s^N)\,\d s.
\end{align*}
Consequently, \eqref{E13} follows by taking \eqref{E9} into consideration.

In the sequel, we shall fix the index  $i\in\mathbb S_N.$
For any $\delta\in (0,1]$, define the function $V_\delta$ by
\begin{align}\label{W0}
V_\delta(x)=(\delta+|x|^2)^{{1}/{2}},\quad x\in\R^d,
\end{align}
which indeed  is  a smooth approximation of the function $\R^d\ni x\mapsto |x|$.
Applying It\^o's formula and utilizing the facts:
\begin{align}\label{W**}
\nn V_\delta(x)=\frac{x}{V_\delta(x)}, \quad   \quad \nn^2V_\delta(x)= \frac{1}{V_\delta(x)}I_d-  \frac{ x \otimes x }{V_\delta(x)^3},\quad x\in\R^d,
\end{align}
 we deduce from \eqref{W4} and \eqref{W2--} that
\begin{equation*}
\begin{split}
\d V_\delta(Z_t^{i,N})&=\big\<\nn V_\delta(Z_t^{i,N}), (b(X_t^i,\mu_t^i)-b(X_t^{i,N},\hat\mu_t^N)\big\>\d t\\
&\quad+\frac{1}{2}\big\<\nn^2 V_\delta(Z_t^{i,N}),  (\bar\si(X_t^i)-
\bar\si(X_t^{i,N}) )(\bar\si(X_t^i)-
\bar\si(X_t^{i,N}) )^*\big\>_{\rm HS} \d t+\d M_t^{i,N}\\
&\le \frac{|Z_t^{i,N}|}{2 V_\delta(Z_t^{i,N})}\big((L_4+L_3^2) |Z_t^{i,N}|+L_4\mathbb W_1(\mu_t^i,\hat\mu_t^N)\big) \d t+ \d M_t^{i,N}\\
&\le \frac{1}{2}\big((L_4+L_3^2)|Z_t^{i,N}|+L_4\mathbb W_1(\mu_t^i,\hat\mu_t^N)\big)\d t+ \d M_t^{i,N},
\end{split}
\end{equation*}
where
$$  \d M_t^{i,N}:= \big\<\nn V_\delta (Z_t^{i,N}),  \big(\bar\si(X_t^i)-
\bar\si(X_t^{i,N}) \big)\d B_t^{2,i}\big\>.$$
Thus, via Fatou's lemma, in addition to $X_0^i=X_0^{i,N}$, we have
\begin{align*}
\E  |Z_t^{i,N}|\le \frac{1}{2}\int_0^t\big((L_4+L_3^2)\E|Z_s^{i,N}|+L_4\E\mathbb W_1(\mu_s^i,\hat\mu_s^N)\big)\d s.
\end{align*}
Note from the triangle inequality that
\begin{align*}
\mathbb W_1(\mu_t^i,\hat\mu_t^N)&\le \mathbb W_1(\mu_t^i,\tilde\mu_t^N)+\mathbb W_1(\tilde\mu_t^N,\hat\mu_t^N)
\le \mathbb W_1(\mu_t^i,\tilde\mu_t^N)+\frac{1}{N}\sum_{j=1}^N|Z_t^{j,N}|,
\end{align*}
since $\frac{1}{N}\sum_{j=1}^N\delta_{X_t^j\times X_t^{j,N}}$ is a coupling of $\tilde\mu_t^N$ and $\hat\mu_t^N$.
Subsequently, due to   the fact that $(X_t^i,X_t^{i,N})_{1\le i\le N}$ are identically distributed (see e.g. \cite[p.\ 122--123]{CD2})
by recalling that $(X_0^{i}, X_0^{i,N})_{1\le i\le N}$
  are i.i.d.\ $\mathscr F_0^1$-measurable random variables,
  we derive that
\begin{align*}
\E  |Z_t^{i,N}|
&\le \frac{1}{2}\int_0^t\big((2L_4+L_3^2)\E|Z_s^{i,N}|+L_4\E\mathbb W_1(\mu_s^i,\tilde\mu_s^N)\big)\d s.
\end{align*}
Whence,  \eqref{EE3} follows from  Gronwall's inequality.

(ii) Next, we  prove \eqref{E9}. We firstly verify  that there exists a constant $c_0>0$ such that for all $  i\in\mathbb S_N$
and all $t>0$,
\begin{align}\label{W*}
  \E|X_t^i|\le \big(1+c_0t+\E|X_0^i|\big)\e^{c_0t}.
 \end{align}
Indeed,  applying It\^o's formula to the function $V_1$, defined in \eqref{W0} with $\delta=1$,
and taking  advantage of \eqref{W**} with $\delta=1$,
 we infer  from \eqref{W4}, \eqref{W2--} 
  and $V_1\ge1$ that for some constant    $c_1>0$,
\begin{align*}
 \d V_1(X_t^i)&=\<\nn V_1(X_t^{i}),b(X_t^i,\mu_t^i)\>\d t+\frac{1}{2}\<\nn^2V_1(X_t^{i}), (\si_1^2+\sigma_0^2)I_d+\bar\si(X_t^i)(\bar\si(X_t^i))^*\>_{\rm HS}\d t+\d M_t^i\\
 &\le \frac{1}{V_1(X_t^{i})}\Big(\<X_t^{i},b(X_t^i,\mu_t^i)\>+\frac{1}{2}\big((\si_1^2+\sigma_0^2)d+\|\bar\si(X_t^i)\|_{\rm HS}^2\big)\Big)\d t+ \d M_t^i\\
 &\le c_1\big(1+|X_t^{i}|+\mu_t^i(|\cdot|)\big)\d t+  \d M_t^i,
\end{align*}
 where
\begin{align*}
 \d M_t^i:=\big\<\nn V_1(X_t^{i}),   \si_1\d B_t^{1,i}+\bar\si(X_t^i) \d   B_t^{2,i}+\si_0\d W_t  \big\>.
\end{align*}
 Thus, by invoking the fact that  $$\E\mu_t^i(|\cdot|)=\E\big(\E\big(|X_t^i|\big|\mathscr F_t^W\big)\big)=\E|X_t^i|,$$ we conclude that
 \begin{align*}
 \E|X_t^i|\le 1+\E|X_0^i|+2c_1\int_0^t(1+\E|X_s^i|)\d s.
\end{align*}
Therefore,  \eqref{W*} is attainable by applying   Gronwall's inequality.

With \eqref{W*} at hand, we proceed  to prove \eqref{E9}
.
Since, $\P^0$-almost surely, $\tilde\mu_t^N$
converges weakly to $\mu_t^i$, and
\begin{equation*}
\P^1\Big(\lim_{N\to\8}\tilde\mu_t^N(|\cdot|)=\mu_t^i(|\cdot|)\Big)=1
\end{equation*}
by means  of the law of large numbers, \cite[Theorem 5.5]{CD1} yields $\P^0$-almost surely
\begin{align*}
\P^1\Big(\lim_{N\to\8}\mathbb W_1(\mu_t^i,\tilde\mu_t^N)=0\Big)=1.
\end{align*}
Whereafter, owing to
\begin{align*}
 \mathbb W_1(\mu_t^i,\tilde\mu_t^N)\le  \mu_t^i(|\cdot|)+\tilde\mu_t^N(|\cdot|)
\end{align*}
and  the fact that $X_t^i$ and $X_t^j$ are identically distributed given the filtration $\mathscr F_t^{W}$, the dominated convergence theorem yields that
\begin{align*}
\P^0\Big(\lim_{N\to\8}\E^1\mathbb W_1(\mu_t^i,\tilde\mu_t^N)=0\Big)=1.
\end{align*}
Next, in the  light of
  $$\E^1\mathbb W_1(\mu_t^i,\tilde\mu_t^N) \le 2 \mu_t^i(|\cdot|)\quad \mbox{ and } \quad \E\mathbb W_1(\mu_t^i,\tilde\mu_t^N)=\E^0\big(\E^1\mathbb W_1(\mu_t^i,\tilde\mu_t^N)\big),$$
the verification
 \eqref{W*} and the dominated convergence theorem enable  us to derive \eqref{E9}.
\end{proof}

\subsection{Asymptotic coupling by reflection}\label{section2.2}
 For any $\vv>0$, define the cut-off function $h_\vv$ by
 \begin{equation}\label{E18}
h_\vv(r)=
\begin{cases}
0,\qquad \qquad\qquad\qquad\qquad \qquad  r\in[0,\vv],\\
1-\exp\big((r-\vv)/(r-2\vv)\big),\quad r\in(\vv,2\vv),\\
1,\qquad \qquad\qquad\qquad\qquad\quad\quad   r\ge 2\vv.
\end{cases}
\end{equation}
For any $x\in \R^d$, define $${\bf n}(x)=\frac{x}{|x|}\I_{\{x\neq {\bf0}\}}+(1,0,\cdots,0)^*\I_{\{x= {\bf0}\}},$$
where $a^*$ means the transpose of the   column vector $a\in \R^d$.
For $d\ge1$ and ${\bf x}:=(x_1,\cdots,x_N)\in  (\R^d)^N$, define
\begin{equation}\label{e:ppoodd}
\phi_{d, \vv}({\bf x}):=I_d-2h_\vv(\rho({\bf x})){\bf n}(\varphi( {\bf x}))\otimes {\bf n}(\varphi(\bf x)),
\end{equation}
where $I_d$ means the $d\times d$-identity matrix,
 $\rho:(\R^d)^N\to[0,\8)$, and  $\varphi:(\R^d)^N\to\R^d.$ In particular, for the case $d=1$, we have
 $$\phi_{1, \vv}({\bf x}) =1-2h_\vv(\rho({\bf x})).$$

In order to investigate the issue on uniform-in-time propagation of chaos for the SDE \eqref{W1}, we construct the asymptotic coupling by reflection
 between the non-interacting particle system \eqref{E6} and the corresponding interacting particle system \eqref{E7}. More precisely, we build  the following approximate interacting particle systems: for all $i\in\mathbb S_N$ and $\vv>0,$
\begin{equation}\label{EE0}
\begin{cases}
\d  X_t^i=b(X_t^i,\mu_t^i) \d t+  \si_1\d B_t^{1,i}+\bar\si(X_t^i) \d   B_t^{2,i}+\si_0\d W_t,\\
\d   X_t^{i,N,\vv}=b(X_t^{i,N,\vv},\hat\mu_t^{N,\vv}) \d t +    \si_1\Pi_{\vv}({\bf X}_t^{N}-{\bf X}_t^{N,N,\vv}) \d B_t^{1,i} + \bar\si \big(  X_t^{i,N,\vv}\big)\d   B_t^{2,i},\\
\qquad\qquad+\sigma_0 \Pi_{\vv}({\bf X}_t^{N}-{\bf X}_t^{N,N,\vv})\d W_t.
\end{cases}
\end{equation}
where  $\hat\mu_t^{N,\vv}:=\frac{1}{N}\sum_{j=1}^N\delta_{  X_t^{j,N,\vv}},$ the empirical measure of interacting particles at time $t$,
 \begin{equation*}
{\bf X}_t^{N}: =\big(X_t^{1}, \cdots,X_t^{N}\big), \quad  {\bf X}_t^{N,N,\vv}: =\big(X_t^{1,N,\vv}, \cdots,X_t^{N,N,\vv}\big)
\end{equation*}
and
\begin{equation}\label{e:diff}
\Pi_{\vv}({\bf X}_t^{N}-{\bf X}_t^{N,N,\vv}):= \varphi_{d, \vv}\big({\bf X}_t^{N}-{\bf X}_t^{N,N,\vv}\big).
\end{equation}
We assume $(X_0^{i,N,\varepsilon})_{1\le i\le N}=(X_0^{i,N})_{1\le i\le N}$ for any $\vv>0$,
and recall that  $(X_0^{i}, X_0^{i,N})_{1\le i\le N}$
  are i.i.d.\ $\mathscr F_0^1$-measurable random variables.  Moreover,   we emphasize that      $X_0^i=X_0^{i,N}$ is not required in \eqref{EE0}.

   The main thesis  in this part is presented as follows.

 \begin{proposition}\label{pro1}
 Let $({\bf X}^{N},{\bf X}^{N,N,\vv})_{\vv>0}:=(({\bf X}^{N}_t)_{t\ge0},({\bf X}^{N,N,\vv}_t)_{t\ge0})_{\vv>0}$ be the process determined by the system \eqref{EE0} such that the initial points $(({\bf X}^{N}_0),({\bf X}^{N,N,\vv}_0))_{\vv>0}$ satisfy all the properties mentioned above. 
Under  $({\bf A}_b)$ and $({\bf A}_{\bar\sigma})$,
 $({\bf X}^{N},{\bf X}^{N,N,\vv})_{\vv>0}$ has a  weakly convergent  subsequence
  such that the corresponding weak limit process is the coupling process of ${\bf X}^{N}$ and ${\bf X}^{N,N}$,
 where  ${\bf X}_t^{N,N}: =\big(X_t^{1,N}, \cdots,X_t^{N,N}\big)$ for any $t\ge0.$
\end{proposition}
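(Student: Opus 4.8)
The plan is to establish the claimed weak subsequential limit by a tightness-plus-identification argument, in the spirit of martingale-problem techniques for SDEs. First I would fix $\vv>0$ and view the pair $({\bf X}^N,{\bf X}^{N,N,\vv})$ as a $(\R^d)^{2N}$-valued continuous process solving the coupled system \eqref{EE0}; the key analytic input is that $\Pi_\vv$ is bounded (indeed $\|\varphi_{d,\vv}\|\le 1$ pointwise, since it is $I_d$ minus twice a rank-one orthogonal projection scaled by $h_\vv\in[0,1]$), so the modified diffusion coefficients are uniformly bounded in $\vv$ and globally Lipschitz/monotone as in $({\bf A}_b)$, $({\bf A}_{\bar\sigma})$.

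The core of the argument is a uniform moment bound: I would apply It\^o's formula to $V_1$ (as in the proof of Proposition \ref{pro0}) to the components $X_t^i$ and $X_t^{i,N,\vv}$ separately, using \eqref{W2--}, \eqref{W4}, the boundedness of $\Pi_\vv$, and the identity $\E\mu_t^i(|\cdot|)=\E|X_t^i|$, to get $\sup_{\vv>0}\sup_{0\le t\le T}\E\big(|X_t^i|+|X_t^{i,N,\vv}|\big)<\8$ for each $T$; a similar computation with $|x|^2$ (or a stopping-time truncation, since high moments of $X_0$ are not assumed) gives enough control for Kolmogorov's tightness criterion. Then I would verify tightness of the laws of $({\bf X}^N,{\bf X}^{N,N,\vv})_{\vv>0}$ on $C([0,\8);(\R^d)^{2N})$: the $X^N$-marginal is fixed (it does not depend on $\vv$), and for the $X^{N,N,\vv}$-part the drift and diffusion coefficients are uniformly bounded on bounded sets with uniformly bounded diffusion, yielding uniform modulus-of-continuity estimates via the Burkholder--Davis--Gundy inequality. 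Prokhorov's theorem then extracts a weakly convergent subsequence $\vv_k\downarrow 0$ with limit law $\mathbb{Q}$.

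To identify the limit, I would pass to a probability space on which the convergence holds almost surely (Skorokhod representation), and check that under $\mathbb{Q}$ the $X^N$-coordinate still solves \eqref{E6} (its law is unchanged along the sequence) and the second coordinate solves \eqref{E7}: the point is that $\Pi_{\vv_k}({\bf X}^N_t-{\bf X}^{N,N,\vv_k}_t)\to \Pi_0(\cdot)$ wherever the limiting difference process stays away from the diagonal set $\{\rho=0\}$, and on that diagonal set $\varphi_{d,0}=I_d$, so in all cases the limiting diffusion coefficients coincide with those of \eqref{E7}; combined with uniqueness in law for the well-posed system \eqref{E6}--\eqref{E7} under $({\bf A}_b)$, $({\bf A}_{\bar\sigma})$, this forces the limit to be a coupling of ${\bf X}^N$ and ${\bf X}^{N,N}$. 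The main obstacle is precisely this identification step: $h_\vv$ vanishes near $r=0$, so the cutoff factor $\Pi_\vv$ degenerates to the identity on $\{\rho\le\vv\}$ and one must argue that the set of times where the limiting difference process hits $\{\rho=0\}$ does not obstruct convergence of the stochastic integrals --- this requires showing the limit occupies $\{\rho=0\}$ only on a Lebesgue-null (in time) set, or else exploiting that $\varphi_{d,0}$ extends continuously-enough through $\rho=0$ for the martingale problem to be passed to the limit; controlling this via the conditional structure (the common noise $W$ being shared) and the non-degeneracy built into the later assumption $({\bf H}_\sigma)$ is the delicate part, and is where the ``efforts to verify tightness of the asymptotic coupling process'' alluded to in the introduction are spent.
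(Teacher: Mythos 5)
Your overall architecture (uniform moment bound, tightness, Prokhorov extraction, identification of the limit via uniqueness in law of \eqref{E7}) matches the paper, and the first two steps are essentially sound, though the paper works with Aldous' criterion and stopping-time localization rather than Kolmogorov's criterion -- a point that matters here because only $\E|X_0^{1,N}|<\8$ is assumed and $b(\cdot,\delta_0)$ is merely locally bounded, so increment moment bounds of Kolmogorov type are not directly available; your parenthetical ``stopping-time truncation'' is the right instinct but is left unexecuted.

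The genuine gap is in the identification step. Your claim that ``in all cases the limiting diffusion coefficients coincide with those of \eqref{E7}'' is false as stated: off the diagonal $\{\rho=0\}$ the limit of $\Pi_{\vv}$ is $I_d-2\,{\bf n}\otimes{\bf n}$, a reflection, not the identity, so the limiting equation is \emph{not} \eqref{E7} driven by the given noises; what coincides is only the law, i.e.\ the generator, because $\Pi_\vv\Pi_\vv^*=I_d-4h_\vv(1-h_\vv)\,{\bf n}\otimes{\bf n}$ and, crucially, the \emph{same} matrix multiplies the common noise for every particle, so the cross terms between particles remain $\si_0^2 I_d$ in the limit (this is exactly the point of Remark \ref{remark:coup}). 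Moreover, the fallback you propose for the diagonal -- showing the limit spends Lebesgue-null time at $\{\rho=0\}$, or invoking non-degeneracy from $({\bf H}_\sigma)$ -- is neither proved nor available: Proposition \ref{pro1} assumes only $({\bf A}_b)$ and $({\bf A}_{\bar\sigma})$, and $\si_0,\si_1$ may vanish, so no ellipticity can be used, and your Skorokhod-representation route stalls precisely at the discontinuity of ${\bf n}(\varphi(\cdot))$ at the diagonal. The paper's proof shows this obstacle is illusory: working at the level of the martingale problem, the generator of the approximate system differs from $\mathscr L_{{\bf X}^{N,N}}$ by a term proportional to $h_\vv(\rho)\bigl(1-h_\vv(\rho)\bigr)$ (see \eqref{E3}), which tends to $0$ pointwise for \emph{every} value of $\rho$, including $\rho=0$; one then passes the martingale identity \eqref{E22} to the limit using \cite[Lemma A.2]{Suzuki} and dominated convergence, and concludes by weak uniqueness of \eqref{E7}. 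No occupation-time analysis of the diagonal and no non-degeneracy are needed; as written, your proposal does not close this step.
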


To prove Proposition \ref{pro1}, we need to show that, for given particle number $N$ and a finite-time horizon $T>0$, the $(\R^{d})^N$-valued process $({\bf X}_t^{N,N,\vv})_{t\ge0} $ owns a uniform moment, which is an ingredient  to illustrate the tightness of $({\bf X}^{N,N,\vv})_{\vv>0}$.

\begin{lemma}
Assume that the Assumptions $({\bf A}_b)$ and $({\bf A}_{\bar\sigma})$ and suppose that $ \E| X_0^{1,N}|<\8.$ Then,  for given $T>0$, there is a constant $C_{T}>0$  
such that for all $\vv>0$ and $N\ge1$,
\begin{equation}\label{E16}
 \E\bigg(\sup_{0\le t\le T}|{\bf X}_t^{N,N,\vv}|\bigg)\le C_{T}N\big(1+\E| X_0^{1,N}|\big).
\end{equation}
\end{lemma}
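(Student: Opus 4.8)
The plan is to run a standard moment estimate for the coupled interacting particle system $({\bf X}_t^{N,N,\vv})_{t\ge0}$ directly from its defining SDE \eqref{EE0}, exploiting that the reflection modification $\Pi_{\vv}$ is a bounded (indeed contraction-type) perturbation of the identity and hence does not affect the quadratic-variation bounds. Concretely, I would first fix $N$, $\vv>0$ and $T>0$, and apply It\^o's formula to $t\mapsto V_1(X_t^{i,N,\vv})=(1+|X_t^{i,N,\vv}|^2)^{1/2}$ for each $i\in\mathbb S_N$, using the derivative identities \eqref{W**} with $\delta=1$. The drift contribution is controlled via \eqref{W2--} (applied with $y=0$, $\nu=\delta_0$) together with the local boundedness and continuity of $b(\cdot,\delta_0)$, giving a bound of the form $c\big(1+|X_t^{i,N,\vv}|+\hat\mu_t^{N,\vv}(|\cdot|)\big)$; the second-order term is bounded using $\|\Pi_{\vv}\|_{\rm HS}\le C(N,d)$ (since $h_\vv\le 1$ so $\phi_{d,\vv}$ has entries bounded by a dimensional constant, and $\Pi_\vv$ stacks $N$ such blocks — this is where the factor $N$ on the right-hand side of \eqref{E16} comes from) and the Lipschitz bound \eqref{W4} on $\bar\si$, which forces $\|\bar\si(x)\|_{\rm HS}\le L_3|x|+\|\bar\si(0)\|_{\rm HS}$.

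Next I would take the supremum over $0\le t\le T$ and then expectations, handling the martingale part by the Burkholder--Davis--Gundy inequality: the bracket of the stochastic integral $M_t^{i,N,\vv}$ is dominated, using $|\nn V_1|\le 1$, $\|\Pi_\vv\|_{\rm HS}^2\le C(N,d)$ and the linear growth of $\bar\si$, by $C(N,d)\int_0^t(1+|X_s^{i,N,\vv}|^2)\,\d s$, so that BDG yields a term absorbable (via Young's inequality) into $\tfrac12\E\sup_{s\le t}V_1(X_s^{i,N,\vv})$ plus $C(N,d)\int_0^t\big(1+\E\sup_{r\le s}V_1(X_r^{i,N,\vv})\big)\d s$. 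Summing over $i\in\mathbb S_N$ and using $\hat\mu_t^{N,\vv}(|\cdot|)=\tfrac1N\sum_{j}|X_t^{j,N,\vv}|$ (so that the mean-field drift term, after summation, is again linear in $\sum_i|X_t^{i,N,\vv}|$) gives a closed Gronwall-type inequality for $g(t):=\E\sup_{0\le s\le t}\sum_{i=1}^N V_1(X_s^{i,N,\vv})$, namely $g(t)\le C_T N + C_T\int_0^t g(s)\,\d s$ with constants independent of $\vv$; here I must also justify finiteness of $g$ a priori, which follows by the usual localization with stopping times $\tau_n=\inf\{t:|{\bf X}_t^{N,N,\vv}|\ge n\}$ and Fatou. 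Gronwall's inequality then yields $g(T)\le C_T N\e^{C_T T}$, and since $|{\bf X}_t^{N,N,\vv}|\le\sum_{i=1}^N V_1(X_t^{i,N,\vv})$ this is exactly \eqref{E16} after renaming the constant and noting that the i.i.d.\ assumption on the initial data makes $\E\sum_i V_1(X_0^{i,N,\vv})\le N(1+\E|X_0^{1,N}|)$.

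The one point requiring genuine care — and the main obstacle — is the uniformity in $\vv$: every estimate on the modified coefficients must use only $h_\vv\le 1$ and never the value of $\vv$, so that the constant $C_T$ in the Gronwall inequality is $\vv$-free; in particular one should note that $\Pi_\vv=\varphi_{d,\vv}(\cdot)$ is bounded in Hilbert--Schmidt norm by a constant depending only on $N$ and $d$ (and not on $\vv$ or on the argument), because each $d\times d$ block $\phi_{d,\vv}({\bf x})$ has the form $I_d-2h_\vv(\rho)\,{\bf n}\otimes{\bf n}$ with $|h_\vv|\le 1$ and $|{\bf n}|=1$. A secondary subtlety is that the non-interacting particles $(X_t^i)_{1\le i\le N}$ also appear inside $\Pi_\vv$ through the argument ${\bf X}_t^{N}-{\bf X}_t^{N,N,\vv}$, but since $\Pi_\vv$ is uniformly bounded this coupling creates no dependence on moments of ${\bf X}_t^N$ and the argument for ${\bf X}_t^{N,N,\vv}$ decouples; moreover the drift $b(X_t^{i,N,\vv},\hat\mu_t^{N,\vv})$ only sees the empirical law of the interacting particles, so no external input enters the Gronwall loop. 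Everything else is routine It\^o calculus of the same type already carried out in the proof of Proposition \ref{pro0} (cf.\ the estimate \eqref{W*}), transplanted from the single McKean--Vlasov equation to the $N$-particle system.
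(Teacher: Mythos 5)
Your overall strategy coincides with the paper's proof: apply It\^o's formula to $V_1$ from \eqref{W0}, bound the drift contribution by $c\big(1+|X_t^{i,N,\vv}|+\hat\mu_t^{N,\vv}(|\cdot|)\big)$ using the monotonicity/Lipschitz assumptions, control the supremum of the martingale part by BDG plus Young after localizing with the stopping times $\tau_n^{N,\vv}$, sum over $i\in\mathbb S_N$, invoke the i.i.d.\ initial data, Gronwall and Fatou, and finally pass from the per-particle bound to $|{\bf X}_t^{N,N,\vv}|$ by summing the $N$ components. In that respect the proposal reproduces the paper's argument.

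One concrete point, however, is wrong and, taken literally, would prevent you from obtaining a constant $C_T$ that is uniform in $N$, as the lemma asserts (``for all $\vv>0$ and $N\ge1$''). You read $\Pi_\vv$ as an $Nd\times Nd$ matrix obtained by stacking $N$ copies of $\phi_{d,\vv}$, estimate $\|\Pi_\vv\|_{\rm HS}\le C(N,d)$, and attribute the factor $N$ in \eqref{E16} to this. In fact, by \eqref{e:ppoodd} and \eqref{e:diff}, $\Pi_\vv({\bf X}_t^N-{\bf X}_t^{N,N,\vv})$ is a single $d\times d$ matrix (it multiplies the $d$-dimensional increments $\d B_t^{1,i}$ and $\d W_t$ in \eqref{EE0}), and the identity \eqref{EWW} gives $\|\Pi_\vv({\bf x})\|_{\rm HS}^2\le d$ uniformly in $\vv$, $N$ and the argument. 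If one carries a bracket bound $C(N,d)\int_0^t(1+|X_s^{i,N,\vv}|^2)\,\d s$ through the BDG--Young--Gronwall loop, the $N$-dependence lands in the Gronwall rate and the final estimate is of order $N\e^{C_N T}$ rather than $C_T N$ with $C_T$ independent of $N$; your closing inequality $g(t)\le C_T N+C_T\int_0^t g(s)\,\d s$ ``with constants independent of $\vv$'' is therefore inconsistent with your own intermediate estimates unless $C(N,d)$ is replaced by a constant depending only on $d$ and the coefficients. The factor $N$ in \eqref{E16} comes solely from $|{\bf X}_t^{N,N,\vv}|\le\sum_{i=1}^N|X_t^{i,N,\vv}|$ together with the exchangeable i.i.d.\ initial data, exactly as in your last step; once $\|\Pi_\vv\|_{\rm HS}^2\le d$ is used, all remaining constants are $N$-free and your argument becomes precisely the paper's.
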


\begin{proof}
It is easy to see from $h_\vv\in[0,1]$ that for all ${\bf x} \in(\R^d)^N,$
\begin{equation}\label{EWW}
\|\Pi_{ \vv}({\bf x})\|_{\rm HS}^2=
  d+4h_\vv(\rho({\bf x}) )(h_\vv(\rho({\bf x}) )-1) \le d.
\end{equation}
Then,  applying It\^o's formula to $V_1$, introduced in \eqref{W0} with $\delta=1$,
and making use  of    $V_1\ge1$, we deduce from  \eqref{W2}  and \eqref{W4} (see also the arguments below \eqref{W*}) that for some constant $c_1>0,$
\begin{align*}
\d V_1(X_t^{i,N,\vv})&\le\frac{1}{2V_1(X_t^{i,N,\vv})}\big(2  \<X_t^{i,N,\vv}, b(X_t^{i,N,\vv},\hat\mu_t^{N,\vv}) \>+ \|\bar\si(X_t^{i,N,\vv})\|_{\rm HS}^2  +(\si_0^2+\sigma_1^2)d\big)\,\d t+\d M^{i,N,\vv}_t\\
&\le c_1\big(1+| X_t^{i,N,\vv}|+\hat\mu_t^{N,\vv}(|\cdot|)\big)\,\d t+\d M^{i,N,\vv}_t,
\end{align*}
 where
\begin{align*}
\d M^{i,N,\vv}_t: &=  \big\<\nn V_1 (X_t^{i,N,\vv}), \Pi_\vv({\bf X}_t^{N}-{\bf X}_t^{N,N,\vv})  (\sigma_1  \d B_t^{1,i} + \sigma_0\d W_t  )+ \bar\si(X_t^{i,N,\vv})   \d   B_t^{2,i}\big\> .
\end{align*}
For any integer $n\ge1$, define the   stopping time
\begin{equation*}
\tau^{N,\vv}_n=\inf\big\{t\ge0:|{\bf X}_t^{N,N,\vv}|\ge n\big\}.
\end{equation*}
Employing BDG's inequality and taking \eqref{W4} and \eqref{EWW} into consideration
yield  that for some constants $c_2,c_3>0,$
\begin{equation*}\label{E15}
\begin{split}
\gamma_n^{i,N,\vv}(t):&=\E\bigg(\sup_{0\le s\le t\wedge \tau^{N,\vv}_n}| X_s^{i,N,\vv}|\bigg)\\
& \le \E| X_0^{i,N,\vv}|+c_2t +c_2\int_0^t\bigg(\gamma_n^{i,N,\vv}(s)+\frac{1}{N}\sum_{j=1}^N\gamma_n^{j,N,\vv}(s)\bigg)\,\d s\\
&\quad+c_2\E\bigg(\int_0^{t\wedge \tau^{N,\vv}_n}(1+|X^{i,N,\vv}_s|)^2\,\d s\bigg)^{1/2}\\
&\le \E| X_0^{i,N,\vv}|+c_3t +c_3\int_0^t\bigg(\gamma_n^{i,N,\vv}(s)+\frac{1}{N}\sum_{j=1}^N\gamma_n^{j,N,\vv}(s)\bigg)\,\d s+\frac{1}{2}\gamma_n^{i,N,\vv}(t),
\end{split}
\end{equation*} where  in the last inequality we used the fact that $2ab\le \eta^{-1}a^2+\eta b^2$ for all $a,b,\eta>0$.
This obviously   implies that for some constant  $c_4>0$
\begin{equation*}
\frac{1}{N}\sum_{i=1}^N\gamma_n^{i,N,\vv}(t)
 \le c_4\bigg(\E| X_0^{1,N}|+ t + \frac{1}{N}\sum_{j=1}^N\int_0^t \gamma_n^{j,N,\vv}(s) \,\d s \bigg),
\end{equation*}
since $(X_0^{i,N,\vv})_{1\le i\le N}=(X_0^{i,N})_{1\le i\le N}$ are i.i.d. $\mathscr F_0^1$-measurable random variables.
Hence,  by applying  Gronwall's inequality and   Fatou's lemma, there exists a  constant $C_T^*>0 $ such that
\begin{equation*}
\frac{1}{N}\sum_{i=1}^N\E\bigg(\sup_{0\le t\le T }| X_t^{i,N,\vv}|\bigg)\le C_T^*\big(1+\E| X_0^{1,N}|
\big).
\end{equation*}
Noting that
\begin{align*}
\E\bigg(\sup_{0\le t\le T}|{\bf X}_t^{N,N,\vv}|\bigg)\le \sum_{i=1}^N\E\bigg(\sup_{0\le t\le T }| X_t^{i,N,\vv}|\bigg),
\end{align*} the assertion \eqref{E16} follows immediately.
\end{proof}

\begin{lemma}\label{tight}
Assume $({\bf A}_b)$ and $({\bf A}_{\bar\sigma})$, and suppose further $\E|X_0^{1,N}|
<\8$. Then,  the path-valued process $\{{\bf X}^{N,N,\vv}\}_{\vv>0}$ is tight in $\mathscr C_T:=C([0,T];(\R^{d})^N)$ for any given $N\ge1$ and  $T>0$.
\end{lemma}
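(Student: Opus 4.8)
The plan is to combine a localization argument with the Kolmogorov--Chentsov tightness criterion, the point being that the one-sided bound \eqref{W2} only, together with $\E|X_0^{1,N}|<\8$, is not enough to produce a modulus-of-continuity estimate for ${\bf X}^{N,N,\vv}$ uniform in $\vv$, since the drift $b$ may grow superlinearly. To get around this I would introduce, for each $R>0$ and $\vv>0$, the stopping times
\[
\tau_R^{N,\vv}:=\inf\{t\ge0:|{\bf X}_t^{N,N,\vv}|\ge R\},
\]
and first reduce the claim to the tightness, for every fixed $R$, of the stopped families $\{{\bf X}^{N,N,\vv}_{\cdot\wedge\tau_R^{N,\vv}}\}_{\vv>0}$ in $\mathscr C_T$. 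Indeed, by the moment bound \eqref{E16} and Markov's inequality,
\[
\sup_{\vv>0}\P\big(\tau_R^{N,\vv}<T\big)\le\sup_{\vv>0}\P\Big(\sup_{0\le t\le T}|{\bf X}_t^{N,N,\vv}|\ge R\Big)\le\frac{C_TN\big(1+\E|X_0^{1,N}|\big)}{R},
\]
which tends to $0$ as $R\to\8$ uniformly in $\vv$; and on the event $\{\tau_R^{N,\vv}\ge T\}$ the stopped path agrees with ${\bf X}^{N,N,\vv}$ on $[0,T]$. Hence, given $\eta>0$, choosing $R$ so that the above probability is $\le\eta/2$ and then a compact subset of $\mathscr C_T$ capturing the stopped processes with probability $\ge1-\eta/2$, one gets a compact set capturing ${\bf X}^{N,N,\vv}$ with probability $\ge1-\eta$ for all $\vv>0$.

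Next, for fixed $R>0$, on the stochastic interval $[0,\tau_R^{N,\vv}]$ all coefficients are bounded uniformly in $\vv$: the bound $|{\bf X}_t^{N,N,\vv}|\le R$ forces each $|X_t^{i,N,\vv}|\le R$ and forces $\hat\mu_t^{N,\vv}$ to be supported in the closed ball of radius $R$, so by $({\bf A}_b)$, \eqref{W3} and the local boundedness of $b(\cdot,\delta_0)$ one has $|b(X_t^{i,N,\vv},\hat\mu_t^{N,\vv})|\le\sup_{|x|\le R}|b(x,\delta_0)|+L_2R=:C_R$; by $({\bf A}_{\bar\sigma})$, $\|\bar\si(X_t^{i,N,\vv})\|_{\rm HS}\le\|\bar\si(0)\|_{\rm HS}+L_3R$; and $\|\Pi_\vv(\cdot)\|_{\rm HS}^2\le d$ by \eqref{EWW}. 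Decomposing ${\bf X}^{N,N,\vv}_{\cdot\wedge\tau_R^{N,\vv}}$ into its finite-variation (drift) part and its continuous-martingale part, the drift increment over $[s\wedge\tau_R^{N,\vv},t\wedge\tau_R^{N,\vv}]$ is bounded deterministically by $C_{R,N}|t-s|$, while applying the Burkholder--Davis--Gundy inequality componentwise to the stopped martingale part yields a constant $C_{R,N}>0$ with
\[
\E\big|{\bf X}^{N,N,\vv}_{t\wedge\tau_R^{N,\vv}}-{\bf X}^{N,N,\vv}_{s\wedge\tau_R^{N,\vv}}\big|^4\le C_{R,N}\,|t-s|^2,\qquad 0\le s\le t\le T,\ |t-s|\le1,
\]
uniformly in $\vv>0$. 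Since the common initial datum ${\bf X}_0^{N,N}$ has a single (hence tight) law, the Kolmogorov--Chentsov tightness criterion gives the tightness of $\{{\bf X}^{N,N,\vv}_{\cdot\wedge\tau_R^{N,\vv}}\}_{\vv>0}$ in $\mathscr C_T$, and combining this with the escape estimate above finishes the argument.

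The main obstacle is precisely the superlinear growth permitted by \eqref{W2}: without localization there is no $\vv$-uniform fourth-moment control on the drift integral, and it is the uniform bound \eqref{E16} --- which itself exploits only the one-sided dissipativity and the boundedness $\|\Pi_\vv\|_{\rm HS}^2\le d$ --- that renders the truncation harmless as $R\to\8$. A secondary point deserving care is that the localizing times $\tau_R^{N,\vv}$ genuinely depend on $\vv$, so the passage from tightness of the stopped families to tightness of $\{{\bf X}^{N,N,\vv}\}_{\vv>0}$ must be routed through the uniform escape probability rather than through a single deterministic cut-off.
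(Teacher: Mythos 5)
Your argument is correct, but it follows a genuinely different route from the paper. You localize the whole process with the stopping times $\tau_R^{N,\vv}$, observe that on $[0,\tau_R^{N,\vv}]$ all coefficients are bounded uniformly in $\vv$ (using \eqref{W3}, the local boundedness of $b(\cdot,\delta_0)$, the Lipschitz bound on $\bar\si$, and $\|\Pi_\vv\|_{\rm HS}^2\le d$ from \eqref{EWW}), derive a $\vv$-uniform fourth-moment increment estimate for the stopped process via BDG, invoke the Kolmogorov--Chentsov tightness criterion, and then transfer tightness from the stopped to the unstopped family through the uniform escape probability supplied by \eqref{E16}. The paper instead verifies Aldous' criterion directly for the unstopped process: marginal tightness at each fixed time via Chebyshev and \eqref{E16}, and smallness of increments over random intervals $[\tau_\vv,\tau_\vv+\delta_\vv]$ by splitting into the drift term, the two reflected-noise stochastic integrals and the $\bar\si$-integral, using the same localization level (chosen through \eqref{E16}) inside the estimates, Chebyshev together with It\^o's isometry (only second moments), and a lemma of Friedman for the stochastic integral with locally bounded integrand. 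What each approach buys: yours is more classical and self-contained (no Aldous criterion, no Friedman lemma) at the price of fourth-moment computations---harmless after localization since the coefficients are then bounded---and the extra stopped-to-unstopped transfer step, which you correctly route through the uniform escape probability rather than a deterministic cut-off; the paper's Aldous route avoids higher moments and works on the unstopped process, but leans on two external tightness/stochastic-integral lemmas. Both proofs ultimately rest on the same two ingredients, the uniform-in-$\vv$ moment bound \eqref{E16} and the bound $\|\Pi_\vv\|_{\rm HS}^2\le d$, so your proposal is a valid alternative proof of the lemma.
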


\begin{proof}
According to \cite[Theorem 1]{Aldous}, for the sake of tightness of
 $\{{\bf X}^{N,N,\vv}\}_{\vv>0}$   in $\mathscr C_T$, it amounts to establishing that
\begin{enumerate}
\item[({\bf i})] for each $t\in[0,T]$, $\{{\bf X}^{N,N,\vv}_t\}_{\vv>0}$ is tight;

\item[({\bf ii})] ${\bf X}^{N,N,\vv}_{\tau_\vv+\delta_\vv}- {\bf X}^{N,N,\vv}_{\tau_\vv}\to 0$ in probability as $\vv\to0$, where, for each $\vv>0$, $ \tau_\vv\in[0,T] $ is a stopping time and  $\delta_\vv\in[0,1]$ is a constant such that  $\delta_\vv\to0$ as $\vv\to0.$
\end{enumerate}
In the sequel, we  aim to verify the two statements above, one by one.

For any $r>0,$ let ${\bf B}_r  =B_r\times B_r\cdots\times B_r\subset (\R^d)^N$, where  $B_r:=\{x\in\R^d:|x|\le r\}$, a compact subset in $\R^d.$  Let $B_r^c$ and ${\bf B}_r^c$
be the respective complements of $B_r$ and ${\bf B}_r$.
 By the Chebyshev inequality, in addition to \eqref{E16},
  we find  that   for any  $t\in[0,T] $ and $R>0$,
\begin{equation*}
\begin{split}
\P\big({\bf X}_t^{N,N,\vv}\in {\bf B}_R^c\big)&\le (2^N-1)\max_{i\in\mathbb S_N}\P\big(X_t^{i,N,\vv}\in B_R^c\big)\\
&\le  \frac{1}{R}(2^N-1)C_{T}N\big(1+\E| X_0^{1,N}|
\big).
\end{split}
\end{equation*}
By virtue of the estimate above,   the statement ({\bf i}) is valid right now.

  For any $\beta>0$, it is easy to notice that
\begin{align*}
\P\big(\big|{\bf X}^{N,N,\vv}_{\tau_\vv+\delta_\vv}- {\bf X}^{N,N,\vv}_{\tau_\vv}\big|\ge \beta\big)
&\le \sum_{i=1}^N \Bigg(\P\bigg(\int_{\tau_\vv}^{\tau_\vv+\delta_\vv}\big|b(  X_s^{i,N,\vv},\hat{ \mu}_s^{N,\vv})\big|\,\d s\ge \frac{\beta}{4N}\bigg) \\
&\qquad\quad+\P\bigg( |\si_1| \bigg|\int_{\tau_\vv}^{\tau_\vv+\delta_\vv}\Pi_\vv({\bf X}_s^{N}-{\bf X}_s^{N,N,\vv})\,\d B_s^{1,i}\bigg|\ge \frac{\beta}{4N}\bigg)\\
&\qquad\quad + \P\bigg(  |\sigma_0| \bigg|\int_{\tau_\vv}^{\tau_\vv+\delta_\vv}\Pi_\vv({\bf X}_s^{N}-{\bf X}_s^{N,N,\vv})\,\d W_s \bigg|\ge \frac{\beta}{4N}\bigg)\\
&\qquad\quad+ \P\bigg( \bigg|\int_{\tau_\vv}^{\tau_\vv+\delta_\vv}\bar\si(X_s^{i,N,\vv})\,\d B_s^{2,i}\bigg|\ge \frac{\beta}{4N}\bigg)\Bigg) \\ &=:\sum_{i=1}^N\sum_{j=1}^4 \Gamma^{j,\vv}_i.
\end{align*}
In the event of  $\sigma_1,\sigma_0=0$, then $\Gamma^{2,\vv}_i=\Gamma^{3,\vv}_i=0$ holds true trivially  so we shall prescribe  $\sigma_1\neq 0$ and $\sigma_0\neq0$ in the subsequent  analysis.
For any $R_0>0$,    applying Chebyshev's inequality followed by \eqref{E16} yields that
\begin{equation*}
\P\bigg(\sup_{0\le t\le T+1}|{\bf X}_t^{N,N,\vv}|\ge R_0\bigg)\le \frac{1}{R_0 }C_{T+1}N\big(1+\E| X_0^{1,N}|
 \big).
\end{equation*}
Hence, for any $\vv_0>0,$ we can take $R_0^*=R_0^*(\vv_0)>0$ large enough so that
\begin{align}\label{E2}
\P\bigg(\sup_{0\le t\le T+1}|{\bf X}_t^{N,N,\vv}|\ge R_0^*\bigg)\le \vv_0.
\end{align}
For $R_0^*>0$ stipulated above,  we define  the stopping time
\begin{equation*}
\tau_0^{N,\vv}=\inf\big\{t\ge0: |{\bf X}_t^{N,N,\vv}|\ge R_0^*\big\}.
\end{equation*}
Whereafter, the term $\Gamma_i^{1,\vv}$ can be estimated as below:
\begin{align*}
 \Gamma^{1,\vv}_i&\le\P\bigg(\int_{\tau_\vv}^{\tau_\vv+\delta_\vv}\big|b(  X_s^{i,N,\vv},\hat{ \mu}_s^{N,\vv})-b(  X_s^{i,N,\vv},\delta_0)\big|\,\d s\ge \frac{\beta}{8N}\bigg)\\
 &\quad+\P\bigg(\int_{\tau_\vv}^{\tau_\vv+\delta_\vv}\big|b(  X_s^{i,N,\vv},\delta_0)\big|\,\d s\ge \frac{\beta}{8N}\bigg)\\
 &\le \P\bigg( \int_{\tau_\vv}^{\tau_\vv+\delta_\vv}\mathbb W_1(\hat{ \mu}_s^{N,\vv},\delta_0) \,\d s\ge \frac{\beta}{8NL_2}\bigg)+ \P\big( \tau_0^{N,\vv}\le T+1\big)\\
 &\quad+\P\bigg(\int_{\tau_\vv}^{\tau_\vv+\delta_\vv}\big|b(  X_s^{i,N,\vv},\delta_0)\big|\,\d s\ge \frac{\beta}{8N},\tau_0^{N,\vv}> T+1\bigg)\\
 &\le  \P\bigg( \frac{1}{N}\sum_{j=1}^N\int_{\tau_\vv}^{\tau_\vv+\delta_\vv}|X_s^{j,N,\vv}| \,\d s\ge \frac{\beta}{8NL_2}\bigg)+ \P\bigg(\sup_{0\le t\le T+1}|{\bf X}_t^{N,N,\vv}|\ge R_0^*\bigg)\\
 &\quad+\P\bigg(\int_{\tau_\vv}^{\tau_\vv+\delta_\vv}\I_{[0,\tau_0^{N,\vv}]}(s)\big|b(  X_s^{i,N,\vv},\delta_0 )\big|\,\d s\ge \frac{\beta}{8N} \bigg),
\end{align*}
where   the second inequality  holds true due to \eqref{W3}.
As a consequence, by taking  \eqref{E16} and \eqref{E2} into account and retrospecting  that $b(\cdot,\delta_0)$ is continuous and locally bounded on $\R^d$ (see the Assumption $({\bf A}_b)$) and $\lim_{\vv\downarrow0}\delta_\vv=0$, we conclude that $ \lim_{\vv\downarrow0}\Gamma^{1,\vv}_i=0 $.

On the one hand,
by applying  Chebyshev's inequality and It\^o's isometry, along with \eqref{W4} and \eqref{EWW}, it follows  that
\begin{align*}
 \Gamma^{2,\vv}_i+\Gamma^{3,\vv}_i&\le \frac{16N^2 }{\beta^2}\bigg((\si_0 ^2+ \sigma_1 ^2)\E\bigg(\int_{\tau_\vv}^{\tau_\vv+\delta_\vv}\big\|\Pi_\vv({\bf X}_s^{N}-{\bf X}_s^{N,N,\vv})\big\|_{\rm HS}^2 \d s\bigg)\bigg)\\
&\le
\frac{16N^2 }{\beta^2}(  \si_0^2+\sigma_1^2 )d\delta_\vv.
\end{align*}
On the other hand,  in terms of \cite[Lemma 2.3]{Frie}, for $\vv_0>0$ given in \eqref{E2}, we find that
\begin{align*}
 \Gamma^{4,\vv}_i&\le
 \vv_0+\P\bigg( \bigg|\int_{\tau_\vv}^{\tau_\vv+\delta_\vv}\big\|\bar\si(X_s^{i,N,\vv})\big\|^2_{\rm HS} \d  s \bigg|\ge \frac{\beta^2\vv_0}{16N^2}\bigg)\\
 &\le \vv_0+\P\bigg(\sup_{0\le t\le T+1}|{\bf X}_t^{N,N,\vv}|\ge R_0^*\bigg)+\P\bigg( \bigg|\int_{\tau_\vv}^{\tau_\vv+\delta_\vv}\I_{[0,\tau_0^{N,\vv}]}(s)\big\|\bar\si(X_s^{i,N,\vv})\big\|^2_{\rm HS} \d  s \bigg|\ge \frac{\beta^2\vv_0}{16N^2}\bigg)\\
 &\le 2\vv_0+\P\bigg( \bigg|\int_{\tau_\vv}^{\tau_\vv+\delta_\vv}\I_{[0,\tau_0^{N,\vv}]}(s)\big\|\bar\si(X_s^{i,N,\vv})\big\|^2_{\rm HS} \d  s \bigg|\ge \frac{\beta^2\vv_0}{16N^2}\bigg),
\end{align*}
where the second inequality is obtained by following the line to deal with the term $ \Gamma^{1,\vv}_i$, and the last display is owing to \eqref{E2}.
Consequently, with the aid of     $\lim_{\vv\downarrow0}\delta_\vv=0$ and the Lipschitz property of $\bar\si$ (so it is continuous  and locally bounded on $\R^d$),   the conclusion
$
\sum_{j=2}^4\lim_{\vv\downarrow0}\Gamma^{j,\vv}_i=0
$
is reachable.
At length,  the statement ({\bf ii}) is verifiable by recalling $ \lim_{\vv\downarrow0}\Gamma^{1,\vv}_i=0 $.
\end{proof}

With Lemma \ref{tight} at hand, we intend to complete the
\begin{proof}[Proof of Proposition $\ref{pro1}$]
Let $\mathscr C_\8= C([0,\8);(\R^{d})^N)$ be the collection of  continuous functions $\psi:[0,\8)\to (\R^{d})^N$.
Define the projection operator  $\pi: \mathscr C_\8\to (\R^{d})^N$   by $\pi_t\psi=\psi(t)$ for $\psi\in \mathscr C_\8$ and $t\ge0,$
 and $\mathcal F_t=\si(\pi_s:s\le t)$ by the $\sigma$-algebra on $\mathscr C_\8$ induced by the projections $\pi_s$  for $s\in[0,t].$

With the help of  Lemma \ref{tight}, the Prohorov theorem  yields  that $({\bf X}^{N},{\bf X}^{N,N,\vv})_{\vv>0}$ has a weakly convergent  subsequence $({\bf X}^{N},{\bf X}^{N,N,\vv_l})_{l\ge 0} $
with the associated weak limit $({\bf X}^{N},\tilde{{\bf X}}^{N,N}) $, where $(\vv_l)_{l\ge0}$ is a sequence such that $\lim_{l\to \infty}\vv_l=0.$
To demonstrate  that
$({\bf X}^{N},\tilde{{\bf X}}^{N,N}) $ is indeed a coupling process of ${\bf X}^{N}$ and ${\bf X}^{N,N}$, it is sufficient to verify that $
\mathscr L_{\tilde{{\bf X}}^{N,N}}=\mathscr L_{{\bf X}^{N,N}},
$ where $\mathscr L_{\tilde{{\bf X}}^{N,N}}$ and $\mathscr L_{{\bf X} ^{N,N}}$ are the infinitesimal generators of $(\tilde{\bf X}_t^{N,N})_{t\ge0}$ and  $({\bf X}_t^{N,N})_{t\ge0}$, respectively. In particular, we have
for $f\in C_c^2((\R^d)^N)$,
\begin{equation*}
\begin{split}
\big(\mathscr L_{{\bf X} ^{N,N}}f\big)({\bf x}) =\sum_{i=1}^N\bigg(&\<\nn_if( {\bf x} ),  b(x^i,\hat \mu_{{\bf x}}^N)\>  +\frac{1}{2}\si_1^2\,\mbox{trace} (\nn^2_{ii}f( {\bf x} ))+\frac{1}{2}\<\nn^2_{ii}f( {\bf x} ),\bar\si(x^i)(\bar\si(x^i))^*\>_{\rm HS}\\
&+\frac{1}{2}\si_0^2\sum_{j=1}^N\mbox{trace}\big(\nn_{ij}^2f( {\bf x} )\big)\bigg),
\end{split}
\end{equation*}
where  $\hat\mu_{\bf x}^N:=\frac{1}{N}\sum_{j=1}^N\delta_{x_j}$.

To realize this goal,   we define for any $f\in C^2_c((\R^{d})^N)$,
\begin{align*}
M_t^{N,f}=f(\tilde{{\bf X}}^{N,N}_t)-f(\tilde{{\bf X}}^{N,N}_0)-\int_0^t(\mathscr L_{{\bf X} ^{N,N}}f)(\tilde{{\bf X}}^{N,N}_s)\,\d s.
\end{align*}
For any $f\in C^2_c((\R^{d})^N)$,
provided that  $(M_t^{N,f})_{t\ge0}$ is a martingale with respect to $(\mathcal F_t)_{t\ge0}$, i.e., for any $t\ge s\ge0$ and $\mathcal F_s$-measurable bounded continuous functional $F:\mathscr C_\8\to\R$,
\begin{equation}\label{E21}
\E\big(M_t^{N,f}F(\tilde{{\bf X}}^{N,N})\big)=\E\big(M_s^{N,f}F(\tilde{{\bf X}}^{N,N})\big),
\end{equation}
via the weak uniqueness of \eqref{E7},
the assertion   $\mathscr L_{\tilde{{\bf X}}^{N,N}}=\mathscr L_{{\bf X}^{N,N}}$ is available, and so $({\bf X}^{N},\tilde{{\bf X}}^{N,N}) $ is   a coupling process of ${\bf X}^{N}$ and ${\bf X}^{N,N}$.

 Below, we intend to prove the assertion \eqref{E21}.
For fixed ${\bf x}\in(\R^{d})^N$, let $\mathscr L^{N,\vv}_{{\bf x}}$ be the infinitesimal generator of $({\bf X}_t^{N,N,\vv})_{t\ge0}$ provided that  the Markov process $( {\bf X}_t^{N,N })_{t\ge0}$ is known in advance.

For any  $f\in C^2((\R^{d})^N)$, ${\bf y}\in(\R^{d})^N$, and given ${\bf x}\in(\R^{d})^N$, we observe that
\begin{equation}\label{E3}
\begin{split}
\big(\mathscr L^{N,\vv}_{{\bf x}}f\big)({\bf y})&=\big(\mathscr L_{{\bf X} ^{N,N}}f\big)({\bf y})- 2\bigg(\si_1^2\sum_{i=1}^N  \<\nn_{ii}^2 f({\bf y}),
{\bf n}(\varphi({\bf x-y}))\otimes {\bf n}(\varphi({\bf x-y}))\>_{\rm HS}  \\
&\qquad\qquad\qquad\qquad\qquad+\si_0^2\sum_{i,j=1}^N\<\nn^2_{ij}f({\bf y}),{\bf n}(\varphi({\bf x-y}))\otimes {\bf n}(\varphi({\bf x-y}))\>_{\rm HS}\bigg)\\
&\qquad\qquad\qquad\qquad\qquad\times h_\vv(\rho({\bf x-y}))\big(1-h_\vv(\rho({\bf x-y}))\big)\\
&=:\big(\mathscr L_{{\bf X} ^{N,N}}f\big)({\bf y})-\big(\mathscr L^{N,\vv,*}_{{\bf x}}f\big)({\bf y}).
\end{split}
\end{equation}

By It\^o's formula, for any $f\in C_c^2(\R^{N})$ and  $t\ge 0,$
\begin{align*}
M_t^{N,f,\vv_l}:=f({\bf X}^{N,N,\vv_l}_t)-f({\bf X}^{N,N,\vv_l}_0)-\int_0^t\big(\mathscr L^{N,\vv_l}_{{\bf X}^{N}_s}f\big)({\bf X}^{N,N,\vv_l}_s)\,\d s
\end{align*}
is a martingale with respect to $(\mathcal F_t)_{t\ge0}$. Therefore, for any $t\ge s\ge0$ and $\mathcal F_s$-measurable bounded continuous functional $F:\mathscr C_\8\to\R$, we obviously have
\begin{equation}\label{E22}
\E\big(M_t^{N,f,\vv_l}F( {\bf X}^{N,N,\vv_l})\big)=\E\big(M_s^{N,f,\vv_l}F( {\bf X}^{N,N,\vv_l})\big).
\end{equation}
Next, owing to \eqref{E3}, $M_t^{N,f,\vv_l}$ can be rewritten as below
\begin{align*}
M_t^{N,f,\vv_l}&=f({\bf X}^{N,N,\vv_l}_t)-f({\bf X}^{N,N,\vv_l}_0)-\int_0^t(\mathscr L_{{\bf X} ^{N,N}}f)({\bf X}^{N,N,\vv_l}_s)\,\d s
+\int_0^t \big(\mathscr L^{N,\vv_l,*}_{{\bf X}^{N}_s}f\big)({\bf X}^{N,N,\vv_l}_s)\,\d s
\end{align*}
Whence, the  assertion \eqref{E21} is reachable by applying \eqref{E22} and \cite[Lemma A.2]{Suzuki} as well as the dominated convergence theorem, and also taking  the fact that
$$\lim_{\vv\to 0}\big(\mathscr L^{N,\vv,*}_{{\bf x}}f\big)({\bf y})=0$$
into consideration,
thanks to
\begin{align*}
\lim_{\vv\downarrow0}\big(h_\vv(r)(1-h_\vv(r))\big)=\lim_{\vv\downarrow0}h_\vv(r)\lim_{\vv\downarrow0}(1-h_\vv(r))=\I_{\{r\neq 0\}}(1-\I_{\{r\neq0\}})=0,\quad r\ge0.
\end{align*}
The proof is complete.
\end{proof}

Before ending of  this section, we make a  comment  on the asymptotic coupling by reflection constructed in \eqref{EE0}.
 \begin{remark}\label{remark:coup}
 In terms of \eqref{e:diff}, the asymptotic reflection matrix
 $\phi_{d,\vv}$ embodies   the
information  concerned with all particles, which are common for each single particle. Intuitively, such construction is reasonable since we design the coupling for the system \eqref{EE0} determined  by all   particles rather than the single particle. Indeed, from the argument above, one can see that
 such an observation plays an extremely important role in verifying that the weak limit process of $({\bf X}_t^{N}, {\bf X}_t^{N,N,\vv})_{\vv>0}$ is the coupling process we expect.
Once
 $\phi_{d,\vv}$ contains only partial information of all particles, due to the involvement of the common noise, it is impossible to  examine that the weak limit process of $({\bf X}_t^{N}, {\bf X}_t^{N,N,\vv})_{\vv>0}$ is the coupling process
by a close inspection of the proof for Proposition \ref{pro1}.
For the case $d\ge2$, if we take $\rho({\bf x})=|x_i|$ and $\varphi({\bf x})=x_i/|x_i|$ for $i\in\mathbb S_N$, then
the  intractable term, for $x_i,y_i\in\R^d,$
 \begin{align*}
 &\quad h_\vv(|x_i-y_i|) {\bf n}(x_i-y_i)({\bf n}(x_i-y_i))^*  +h_\vv(|x_j-y_j|) {\bf n}(x_j-y_j)({\bf n}(x_j-y_j))^* \\
&\qquad -2h_\vv(|x_i-y_i|)h_\vv(|x_j-y_j|)\<{\bf n}(x_i-y_i),{\bf n}(x_j-y_j)\>  {\bf n}(x_i-y_i)({\bf n}(x_j-y_j))^*
 \end{align*}
 appears naturally in the infinitesimal generator of $({\bf X}_t^{N,N,\vv})_{t\ge0}$. However, as $\vv\to0$, the preceding term  need not converge to zero.
  This  definitely  brings essential difficulties to identify the
 weak limit process of $({\bf X}^{N,N,\vv})_{\vv>0}$.
\end{remark}

\section{Proof of Theorem \ref{thm1}}\label{sec3}

Our goal in this section is to complete the proof of Theorem \ref{thm1}.
In particular, we are only concerned with the case $d=1$.
 To this end, there are a series of  preparations  to be carried out.

\ \

The non-interacting particle system corresponding to \eqref{E1} is  governed by the following SDEs:
for each $i\in\mathbb S_N,$
\begin{equation}\label{E11}
\d X_t^i= b(X_t^i,\mu_t^i) \,\d t+  \si(X_t^i) \,\d   B_t^i+\si_0\,\d W_t,
\end{equation}
where $\mu_t^i:=\mathscr L_{X_t^i|\mathscr F_t^W}$,
$(B^i)_{i\in\mathbb S_N}:=((B^i_t)_{t\ge0})_{i\in \mathbb S_N}$
are mutually independent $1$-dimensional Brownian motions on $(\OO^1,\mathscr F^1,(\mathscr F^1_t)_{t\ge0}, \P^1)$, and  $(X_0^{i})_{1\le i\le d}$ are i.i.d.\ $\mathscr F_0^1$-measurable random variables. According to \cite[Proposition 2.11]{CD2}, for any $T>0$ and  $i\in\mathbb S_N,$
\begin{equation*}
 \P^0\big( \mu_t^i= \mu_t^1\quad \mbox{for all } t\in[0,T]\big)=1
 \end{equation*}
 so that we can write $\mu_t=\mu_t^i$ for all $i\in\mathbb S_N.$
 Moreover, as  shown   in \cite[(2.4)]{CD2}, $(\mu_t)_{t>0}$ solves the nonlinear stochastic FPE:
 \begin{equation}\label{E12}
 \d \mu_t=-\partial_x (b(\cdot,\mu_t)\mu_t)\,\d t+\frac{1}{2}\partial_{xx}^2\big( (\si^2(\cdot)+\si^2 _0 )\mu_t\big)\,\d t-\partial_x\big((\sigma_0\d W_t)\mu_t\big).
 \end{equation}
The preceding  SPDE is understood in the weak sense;  namely, for any   test function $f\in C_c^2(\R)$,
\begin{equation*}
 \d\mu_t(f)=  \mu_t\big(f'(\cdot)b(\cdot,\mu_t)\big)\,\d t+\frac{1}{2} \mu_t\big((\si(\cdot)^2+\si_0^2)f''(\cdot)\big)\,\d t + \mu_t\big(f'(\cdot)\si_0\,\d W_t \big).
 \end{equation*}

 To expound that   the idiosyncratic noise might   make  contributions to ergodicity of the measure-valued Markov process $(\mu_t)_{t>0}$ solving \eqref{E12}, we need to decompose the   idiosyncratic noise part so that an asymptotic coupling by reflection can be constructed.
  Due to $\kk_{\sigma,1}\le \si(x)^2$ (see the Assumption $({\bf H}_\sigma)$), there exists a constant $\alpha>0$ such that $\inf_{x\in\R}\bar{\si}_\alpha(x)>0$, where
\begin{align}\label{EE7}
\bar{\si}^2_\alpha(x):=\si^2(x)-\alpha \kk_{\si,1}.
 \end{align}
 Subsequently, we consider the stochastic particle system:
\begin{equation*}
\d \bar X_t^i= b(\bar X_t^i,\bar\mu_t^i) \,\d t+  \ss{\aa\kk_{\sigma,1}}\,\d B_t^{1,i}+\bar\si_\alpha(\bar X_t^i) \,\d   B_t^{2,i}+\si_0\,\d W_t,
\end{equation*}
where $\bar \mu_t^i:=\mathscr L_{\bar X_t^i|\mathscr F_t^W}$, $(B^{1,i})_{i\in\mathbb S_N}:=((B^{1,i}_t)_{t\ge0})_{i\in \mathbb S_N}$ and $(B^{2,i})_{i\in\mathbb S_N}:=((B^{2,i}_t)_{t\ge0})_{i\in \mathbb S_N}$
are mutually independent $1$-dimensional Brownian motions on $(\OO^1,\mathscr F^1,(\mathscr F^1_t)_{t\ge0}, \P^1)$, and $(\bar X_0^{i})_{1\le i\le d}$ are i.i.d.\ $\mathscr F_0^1$-measurable random variables.
Once more, applying \cite[Proposition 2.11]{CD2}, we find that for   any $T>0$ and  $i\in\mathbb S_N,$
\begin{equation*}
 \P^0\big( \bar\mu_t^i= \bar\mu_t^1\quad \mbox{ for all } t\in[0,T]\big)=1
 \end{equation*}
 so we can also write $\bar\mu_t=\bar\mu_t^i$ for all $i\in\mathbb S_N.$
Satisfactorily, by noting   $\si(x)^2=\bar\si_\alpha(x)^2+\aa\kk_{\si,1}$,
$(\bar\mu_t)_{t\ge0}$ also solves the SPDE \eqref{E12}. Therefore, to tackle   ergodicity of the measure-valued process $(\mu_t)_{t>0}$, it is sufficient to work on the McKean-Vlasov SDE with common noise:
\begin{equation}\label{EE6}
\d   \bar X_t= b(\bar X_t, \bar\mu_t) \,\d t+  \ss{\aa\kk_{\sigma,1}}\,\d B_t^{1}+\bar\si_\alpha(\bar X_t) \,\d   B_t^{2}+\si_0\,\d W_t.
\end{equation}
The previous  interpretations roughly explain why we deal with the McKean-Vlasov SDE with common noise formulated in the form of \eqref{W1}.

\ \

As analysis above, in this section, we still take  the SDE \eqref{W1} with $d=1$ as our research object.
 Besides the Assumptions  $({\bf H}_{b,1})$ and $({\bf H}_{b,2})$ presented in the Introduction section,   we shall assume that
\begin{enumerate}\it
\item[{\rm(${\bf H}_{\bar\sigma}'$)}] there exists a constant $L_{\bar\si}>0$ such that
\begin{align*}
|\bar\si(x)-\bar\si(y)|\le L_{\bar\si}\big(1\wedge |x-y|\big),\quad x,y\in\R.
\end{align*}
\end{enumerate}
Note that under  the Assumptions  $({\bf H}_{b,1})$, $({\bf H}_{b,2})$ and $({\bf H}_{\bar\sigma}')$, the assumptions $({\bf A}_b)$ and $({\bf A}_{\bar\sigma})$ hold trivially, and so the results in the Section 2 are applicable.

Let $(X_t)_{t\ge0}$ be the unique strong solution to  the SDE \eqref{W1} so that the distribution of $X_0$ is given by $\mu$. For every $t>0$, let $\mu_t =\mathscr L_{X_t|\mathscr F_t^{W}}$ be the regular
conditional distribution of $X_t$ given the $\sigma$-algebra $\mathscr F_t^{W}$.
Before  the proof of Theorem \ref{thm1}, we shall show that $(\mu_t)_{t>0}$,
associated with \eqref{W1}, is exponentially decay  under $\mathcal W_1$-Wasserstein distance.
In detail, we attempt  to  attest  the following statement.

\begin{theorem}\label{thm2}
Assume $({\bf H}_{b,1})$, $({\bf H}_{b,2})$ and $({\bf H}_{\bar\sigma}')$. Then, there exists a constant $\lambda_3^*>0$ such that for
all $\lambda_3\in[0,\lambda_3^*]$ so that there exist constants $C,\lambda>0$ so that for all $t>0$ and $\mu,\nu\in \mathscr P_1(\R)$,
\begin{equation}\label{E20}
\mathcal W_1(\mu P_t,\nu P_t)\le C\e^{-\lambda t}\mathbb W_1(\mu,\nu).
\end{equation}
\end{theorem}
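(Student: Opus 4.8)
The plan is to push the contraction estimate down to the interacting particle system \eqref{E7}, where the nonlinearity is frozen into an empirical measure and an Eberle-type reflection coupling in the spirit of Subsection \ref{section2.2} becomes available, and then to let the particle number tend to infinity.

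\emph{Step 1 (reduction to interacting particle systems).} Realising the two conditional-law flows over the same common noise $W$ — i.e.\ letting $X$ (with $\mathscr L_{X_0}=\mu$) and $Y$ (with $\mathscr L_{Y_0}=\nu$) solve \eqref{W1} driven by one and the same $W$ and setting $\mu_t=\mathscr L_{X_t|\mathscr F^W_t}$, $\nu_t=\mathscr L_{Y_t|\mathscr F^W_t}$ — produces a coupling of $\mu P_t$ and $\nu P_t$, whence $\mathcal W_1(\mu P_t,\nu P_t)\le\E\mathbb W_1(\mu_t,\nu_t)$. Fix $t>0$ and bring in the interacting systems \eqref{E7} associated with $\mu$ and with $\nu$, with empirical measures $\hat\mu^N_t$ and $\hat\nu^N_t$ and with initial data $(X^i_0,Y^i_0)_{1\le i\le N}$ i.i.d.\ and, for each $i$, a $\mathbb W_1$-optimal coupling of $(\mu,\nu)$. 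Then
\begin{equation*}
\mathbb W_1(\mu_t,\nu_t)\le \mathbb W_1(\mu_t,\hat\mu^N_t)+\mathbb W_1(\hat\mu^N_t,\hat\nu^N_t)+\mathbb W_1(\hat\nu^N_t,\nu_t),
\end{equation*}
and, by Proposition \ref{pro0} (using \eqref{E9} and \eqref{E13}, once for $\mu$ and once for $\nu$), the expectations of the first and third terms tend to $0$ as $N\to\infty$ for each fixed $t$. It thus suffices to produce $\lambda_3^*>0$ and constants $C,\lambda>0$ \emph{independent of $N$} such that $\E\mathbb W_1(\hat\mu^N_t,\hat\nu^N_t)\le C\e^{-\lambda t}\mathbb W_1(\mu,\nu)$ for every $\lambda_3\le\lambda_3^*$.

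\emph{Step 2 (asymptotic reflection coupling of the two interacting systems).} Adapting the construction of Subsection \ref{section2.2}, couple $({\bf X}^{N,N})$ (for $\mu$) and $({\bf Y}^{N,N})$ (for $\nu$) by $h_\varepsilon$-regularised reflections, $h_\varepsilon$ as in \eqref{E18}: reflect the additive common noise for the $\nu$-system along the mean-difference direction $\frac1N\sum_i(X^{i,N}_t-Y^{i,N}_t)$, reflect the constant-coefficient idiosyncratic noise $\sigma_1\,\d B^{1,i}$ componentwise along $(X^{i,N}_t-Y^{i,N}_t)/|X^{i,N}_t-Y^{i,N}_t|$, and keep the multiplicative part $\bar\sigma\,\d B^{2,i}$ synchronous. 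The martingale-problem argument of Proposition \ref{pro1} (which used only $h_\varepsilon(1-h_\varepsilon)\to0$ and never that one system is non-interacting) shows that along a subsequence $\varepsilon_l\downarrow0$ the regularised pair converges weakly to a genuine coupling of $({\bf X}^{N,N})$ and $({\bf Y}^{N,N})$; reflecting the common noise is admissible here because \eqref{E7} only sees empirical measures, so the law of $\hat\nu^N$ — and hence $\nu P_t$ in the chain of Step 1 — is insensitive to which Brownian motion drives the $\nu$-particles. It therefore suffices to prove the contraction estimate for the regularised system with $\varepsilon$-independent constants and then pass to the limit by lower semicontinuity of $\mathbb W_1$.

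\emph{Step 3 (Eberle estimate and Grönwall).} Denoting by $Z^i_t$ the $i$-th component difference of the regularised coupling and $\bar Z_t=\frac1N\sum_iZ^i_t$, one has
\begin{equation*}
\d\langle Z^i\rangle_t=\big(\bar\sigma(X^{i,N,\varepsilon}_t)-\bar\sigma(Y^{i,N,\varepsilon}_t)\big)^2\,\d t+4\sigma_1^2h_\varepsilon(|Z^i_t|)^2\,\d t+4\sigma_0^2h_\varepsilon(|\bar Z_t|)^2\,\d t ,
\end{equation*}
so that as $\varepsilon\downarrow0$ the reflection generates an effective short-range diffusivity at least $4\sigma_1^2$ on $\{Z^i_t\neq0\}$, increased by $4\sigma_0^2$ on $\{\bar Z_t\neq0\}$. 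Apply It\^o's formula to $f(|Z^i_t|)$ with $f$ bounded, increasing, concave, $C^2$, comparable to $r\mapsto r$ on $[0,1]$, with $f''<0$ on $[0,\ell_0]$, and chosen à la Eberle so that on $[0,\ell_0]$ the strictly negative term $\tfrac12f''(r)\,4\sigma_1^2$ beats the non-dissipative contribution $\tfrac12f'(r)(\lambda_1+\lambda_2)r$ coming from \eqref{E4}, while on $[\ell_0,\infty)$ the dissipative term $-\tfrac12\lambda_2f'(r)r$ is used; concavity makes the synchronous $\bar\sigma$-term non-positive and the extra common-noise term only helps, while $({\bf H}_{\bar\sigma}')$ ensures the needed integrability. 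The sole term not yet absorbed is the measure feedback, bounded via \eqref{EE1} by $\lambda_3f'(|Z^i_t|)\mathbb W_1(\hat\mu^{N,\varepsilon}_t,\hat\nu^{N,\varepsilon}_t)\le\frac{\lambda_3}{N}\sum_j|Z^j_t|$; averaging over $i$ and using comparability of $f$ with the identity, this is at most a fixed constant times $\frac{\lambda_3}{N}\sum_jf(|Z^j_t|)$. Taking $\lambda_3^*$ so that this constant times $\lambda_3^*$ is below the contraction rate of the previous sentence, then averaging over $i$ and invoking Grönwall's inequality, one gets, for some $\lambda>0$ independent of $N$ and $\varepsilon$,
\begin{equation*}
\frac1N\sum_{i=1}^N\E f\big(|Z^i_t|\big)\le\e^{-\lambda t}\,\frac1N\sum_{i=1}^N\E f\big(|X^i_0-Y^i_0|\big),\qquad t>0 .
\end{equation*}
By comparability of $f$ with the identity and $\E|X^i_0-Y^i_0|=\mathbb W_1(\mu,\nu)$ this gives $\frac1N\sum_i\E|Z^i_t|\le C\e^{-\lambda t}\mathbb W_1(\mu,\nu)$; letting $\varepsilon\downarrow0$ and using $\mathbb W_1(\hat\mu^N_t,\hat\nu^N_t)\le\frac1N\sum_i|X^{i,N}_t-Y^{i,N}_t|$ together with Fatou gives the $N$-uniform bound of Step 1, and then $N\to\infty$ yields \eqref{E20}.

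\emph{Main obstacle.} The crux lies in Steps 2--3: since the common noise is shared, it can be reflected only along one, particle-independent direction, which forces one to control the mean difference $\bar Z_t$ (contracted by the common-noise reflection) and the componentwise fluctuations (contracted by the idiosyncratic reflection) together, and to check that these reflections plus the synchronous $\bar\sigma$-term yield no destructive cross terms in the It\^o expansion — exactly the obstruction signalled for $d\ge2$ in Remark \ref{remark:coup}, which evaporates for $d=1$ because every reflection matrix is then a scalar in $[-1,1]$. The second delicate point is quantitative: the estimate for the two interacting systems is self-referential through the $\mathbb W_1$-distance of their empirical measures, so $\lambda_3$ must be small, and pinning $\lambda_3^*$ down amounts to balancing the Eberle contraction rate (built from $\sigma_0^2,\sigma_1^2$ and the constants in \eqref{E4}) against the Lipschitz constant $\lambda_3$ of \eqref{EE1} — this is where the assertion that both the common noise and the idiosyncratic noise facilitate the contractivity acquires its quantitative content.
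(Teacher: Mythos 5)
Your plan is viable in outline but it diverges from the paper at the decisive step, and the divergence is exactly where a genuine gap appears.

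What you do differently, and what it buys: the paper's middle term compares the empirical measure of the \emph{non-interacting} $\mu$-particles with that of the \emph{interacting} $\nu$-particles, so the drift mismatch contains $b(X^i_t,\mu^i_t)-b(X^i_t,\tilde\mu^{N,i}_t)$ and has to be paid for with $({\bf H}_{b,2})$, the uniform moment bound of Lemma \ref{lem1}, and the $\varphi(N)+N^{-1}$ error in Proposition \ref{pro3}. You instead compare two \emph{interacting} systems, where the measure feedback is exactly $\mathbb W_1(\hat\mu^N_t,\hat\nu^N_t)\le\frac1N\sum_j|Z^j_t|$, so the Gr\"onwall step closes on itself with no $\varphi(N)$ term (indeed without ever invoking $({\bf H}_{b,2})$), and the outer terms are handled at fixed $t$ by Proposition \ref{pro0}, as in the paper. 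In $d=1$ the identification argument of Proposition \ref{pro1} does go through for your reflection factors, since all generator discrepancies are again of the form $h_\vv(\cdot)(1-h_\vv(\cdot))\to 0$; and for Theorem \ref{thm1}, where $\si_1=\ss{\alpha\kk_{\sigma,1}}>0$, your route would suffice and is in some respects cleaner.

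The gap: your contraction rate rests on $\sigma_1$ alone. Reflecting the common noise along the mean difference contributes to $\d\langle Z^i\rangle_t$ only through $4\sigma_0^2h_\vv(|\bar Z_t|)^2$, and $\bar Z_t$ can vanish (or be smaller than $\vv$) while $Z^i_t\neq0$, so no $\sigma_0$-term can be retained in the componentwise Eberle estimate — your Step 3 indeed drops it, and there is no argument controlling the occupation of the region $\{|\bar Z_t|\le\vv,\ Z^i_t\neq0\}$ without the extra structure discussed in Remark \ref{remark}. Consequently your proof gives nothing in the regime $\sigma_1=0<\sigma_0$, which Theorem \ref{thm2} is built to cover (the constants in \eqref{E0} and the rate \eqref{E28} are manufactured from $\sigma_0^2+\sigma_1^2$) and which carries the paper's main point that the common noise itself produces contraction. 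The paper avoids this by reflecting \emph{both} $B^{1,i}$ and $W$ with the single all-particle factor $\Pi_\vv$ keyed to $\rho({\bf Z}_t)=\frac1N\sum_j|Z^j_t|$, which is active whenever any pair is uncoupled, so every $|Z^i_t|$ sees effective short-range diffusivity $4(\sigma_0^2+\sigma_1^2)$; the mismatch between $h_\vv(\|{\bf Z}\|_1)$ and the individual $|Z^i|$ is then absorbed, after averaging over $i$, by the $\Upsilon_i$-terms and the bound $r(1-h_\vv(r))\le 2\vv$ in the proof of Proposition \ref{pro3}. A secondary point: because your common-noise reflection does not switch off at $Z^i_t=0$ (only when $|\bar Z_t|\le\vv$), the difference $Z^i$ has a non-vanishing diffusion coefficient at the origin, so the It\^o--Tanaka expansion of $|Z^i_t|$, hence of $f(|Z^i_t|)$, carries a nonnegative local-time term $f'(0)\,\d L^{0,i}_t$ that your Step 3 ignores (a subtlety that is also present, tacitly, in \eqref{E27} for the paper's choice of $\Pi_\vv$); in your scheme this reflection is gratuitous, since you never exploit it quantitatively, so the clean repair is to keep $W$ synchronous — but that makes the limitation above explicit. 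To obtain Theorem \ref{thm2} in full strength you essentially have to adopt the common reflection factor of \eqref{EE0} and rerun your Step 3 along the lines of Proposition \ref{pro3} (also note that, for fixed $\vv$, your displayed Gr\"onwall bound should carry an additional $O(\vv)$ term, harmless only because you pass to the weak limit afterwards).
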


Prior to the commencement on the proof of Theorem \ref{thm2}, some additional work need to be accomplished. In the first place, for each $i\in\mathbb S_N$, $(X_t^i)_{t\ge0}$ has a   finite moment  in the infinite-time horizon, which is stated as below.

 \begin{lemma}\label{lem1}
Assume $({\bf H}_{b,1})$ with  $\lambda_2>2\lambda_3$ and $({\bf H}_{\bar\sigma}')$. Then,
 there is a constant $C_0>0$ such that
\begin{align}\label{E8}
\sup_{i\in\mathbb S_N}\sup_{t\ge0}\E|X_t^i| \le \E|X_0^1| + C_0
\end{align}
in case that $(X_0^i)_{i\in\mathbb S_N}$ are i.i.d. $\mathscr F_0^1$-measurable random variables such that $\E|X_0^1|<\8$.
\end{lemma}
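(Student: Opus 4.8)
The plan is to apply Itô's formula to the smooth surrogate $V_1(x)=(1+|x|^2)^{1/2}$ along each component $X_t^i$ of the non-interacting particle system \eqref{E11}, and to extract a dissipativity estimate of the form $\d\,\E V_1(X_t^i)\le -c\,\E V_1(X_t^i)\,\d t+(\text{bounded})\,\d t+\lambda_3\,\E|X_t^i-\text{(mean over }j\text{)}|\,\d t$, which after taking expectations and averaging over $i$ closes via Gronwall. The essential input is the long-distance dissipativity \eqref{E4}: writing $b(x,\mu)-b(0,\mu)$ and then controlling $b(0,\mu_t^i)$ via the $\mathbb W_1$-Lipschitz bound \eqref{EE1} together with $b(0,\delta_0)$ being a fixed constant, one obtains, for $|x|\ge\ell_0$, a bound $2x\,b(x,\mu_t^i)\le -\lambda_2|x|^2+C(1+\mathbb W_1(\mu_t^i,\delta_0))|x|\le -\lambda_2|x|^2+C(1+\E^{\mathscr F_t^W}|X_t^i|)|x|$, while on $\{|x|<\ell_0\}$ everything is bounded by a constant. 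The diffusion contribution is handled exactly as in the proof of Proposition \ref{pro0}: $\nn^2 V_1$ is bounded by $1/V_1$, so the second-order term contributes at most $c_1\,\d t$ uniformly, using $\kappa_{\sigma,1}\le\sigma^2\le\kappa_{\sigma,2}$ from $({\bf H}_\sigma)$ (equivalently the form imposed here through $({\bf H}'_{\bar\sigma})$ plus boundedness). The martingale part has bracket controlled by $\kappa_{\sigma,2}+\sigma_0^2$ times a bounded integrand, hence is a true martingale after the usual localization, and $\E$ of it vanishes.

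The key step is then to exploit $\E\mu_t^i(|\cdot|)=\E|X_t^i|$ (conditioning identity, as used below \eqref{W*}), so that after taking full expectation the cross term becomes $\lambda_3\,\E|X_t^i|$ up to constants; choosing $\lambda_2>2\lambda_3$ guarantees that the net linear coefficient $-\lambda_2+2\lambda_3$ (or $-\lambda_2+c\lambda_3$ for a harmless absolute constant $c$ absorbed by shrinking $\lambda_3^*$) is strictly negative, so one gets a differential inequality $\ff{\d}{\d t}\E V_1(X_t^i)\le -\delta\,\E V_1(X_t^i)+C$ for some $\delta>0$, uniformly in $i$ and $N$. Since the $X_0^i$ are i.i.d.\ with $\E|X_0^1|<\infty$, integrating yields $\sup_t\E V_1(X_t^i)\le \E V_1(X_0^1)+C/\delta\le \E|X_0^1|+C_0$ with $C_0$ independent of $i,N$, and $|x|\le V_1(x)$ gives \eqref{E8}. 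I would be slightly careful that the estimate must be uniform in $N$: this holds because the only $N$-dependence enters through $\mu_t^i$, and the conditioning identity collapses it to $\E|X_t^i|$, which by the identical-distribution property (all $X_t^i$ share the same law given $\mathscr F_t^W$) does not depend on $i$ either.

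The main obstacle — really only a technical nuisance rather than a genuine difficulty — is the localization: $b(\cdot,\delta_0)$ is merely continuous and locally bounded, and $\bar\sigma$ is globally Lipschitz, so one introduces the stopping times $\tau_n=\inf\{t:|X_t^i|\ge n\}$, proves the differential inequality for $t\wedge\tau_n$, applies Gronwall on the stopped process, and then passes $n\to\infty$ via Fatou's lemma — exactly the pattern already executed in the proof of Proposition \ref{pro0} and in Lemma within the tightness argument. The one place demanding attention is verifying that on $\{|x|\ge\ell_0\}$ the bounded remainder terms coming from $b(x,\mu)-b(x,\delta_0)$ and from $b(x,\delta_0)-b(0,\delta_0)$ (the latter controlled by the $\{|x-y|\le\ell_0\}$ part of \eqref{E4} applied with $y=0$, modulo shifting the threshold) do not reintroduce a positive $|x|^2$ term; a Young's inequality $C(1+\E|X_t^i|)|x|\le \ff{\lambda_2}{4}|x|^2+C'(1+(\E|X_t^i|)^2)$ would spoil linearity, so instead one keeps the term as $C(1+\E|X_t^i|)\,\E|x|/V_1(x)\le C(1+\E|X_t^i|)$ and feeds $\E|X_t^i|\le \E V_1(X_t^i)$ back into the loop, which is precisely why the sharp constraint $\lambda_2>2\lambda_3$ (and not a weaker one) is what makes the Gronwall argument close.
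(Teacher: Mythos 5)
Your proposal is correct and follows essentially the same route as the paper: It\^o's formula applied to $V_1(x)=(1+|x|^2)^{1/2}$, the decomposition $b(x,\mu)-b(0,\mu)$, $b(0,\mu)-b(0,\delta_0)$, $b(0,\delta_0)$ combined with \eqref{E4}, \eqref{EE1} and the boundedness of $\bar\si$ from $({\bf H}_{\bar\sigma}')$, the identity $\E\,\mu_t^i(|\cdot|)=\E|X_t^i|$ to collapse the measure term, and the strict gap $\lambda_2>2\lambda_3$ to close a linear differential inequality (the paper packages the Gronwall step as an exponential weight $\e^{\lambda^* t}$ with $\lambda^*=\tfrac12(\lambda_2-2\lambda_3)$). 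Your remark that one must keep the factor $|x|/V_1(x)\le 1$ rather than use Young's inequality is exactly the point that makes the sharp constant $2$ (and hence the stated hypothesis) suffice.
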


\begin{proof}
According to  \eqref{E4}, \eqref{EE1} and (${\bf H}_{\bar\sigma}'$), for all $x\in\R$ and $\mu\in \mathscr P_1(\R)$,
\begin{equation*}
\begin{split}
2x b(x,\mu)+\bar\si(x)^2 &= 2x(b(x,\mu)-b(0,\delta_0)) +2x b(0,\delta_0) +\bar\si(x)^2\\
&= 2x(b(x,\mu)-b(0,\mu))+2x(b(0,\mu)-b(0,\delta_0)) +2x b(0,\delta_0) +\bar\si(x)^2\\
&\le  -\lambda_2|x|^2+2\lambda_3\mathbb W_1(\mu,\delta_0)|x|+2|b(0,\delta_{0})||x|+(\lambda_1+\lambda_2)\ell_0+2\big(L_{\bar\sigma}^2+ \bar\si(0)^2\big).
\end{split}
\end{equation*}
Then,
 applying  It\^o's formula to $V_1$, defined in \eqref{W0} with $\delta=1$,
 yields that
\begin{equation*}\label{E14}
\begin{split}
\d \big(\e^{ \lambda^*t}V_1(X_t^i)\big)&\le\e^{ \lambda^*t}\bigg( \lambda^*V_1(X_t^i) +\frac{1}{2V_1(X_t^i)}\big( 2 X_t^i b(X_t^i, \mu_t^i)   +\bar\si(X_t^i)^2  + \si_1^2+ \si_0^2\big) \bigg) \d t +\d M_t^i\\
&\le -  \lambda_3\e^{ \lambda^*t} \big(V_1(  X_t^i)-\mu_t^i(V_1)\big)\,\d t  +C_0^*\e^{ \lambda^*t}\,\d t+ \d M_t^i
\end{split}
\end{equation*}
for some $C_0^*>0$ and
 some martingale $(M_t^i)_{t\ge0}$, where  $\lambda^*:=\frac{1}{2}(\lambda_2-2\lambda_3)$.
Since
$
\E\,\mu_t^i(V_1)=\E V_1(X_t^i),
$
and  $(X_0^i)_{i\in\mathbb S_N}$ are i.i.d. $\mathscr F_0^1$-measurable random variables,
 we derive  that
\begin{equation*}
\E V_1(X_t^i)
 \le \E V_1( X_0^1)+ C_0^*/\lambda^*.
\end{equation*}
 This subsequently implies   the desired  assertion \eqref{E8}.
\end{proof}

The following exposition demonstrates the asymptotic  conditional propagation of chaos in the long-time horizon for the conditional McKean-Vlasov SDE \eqref{W1}, which is extremely important  on treating  ergodicity of the measure-valued process $(\mu_t)_{t>0}$. Recall that we focus on the setting  $d=1$. Let $\rho({\bf x})= \|{\bf x}\|_{1}:=\frac{1}{N}\sum_{j=1}^N|x_j|$ in \eqref{e:ppoodd}. In particular, we obtain from \eqref{e:ppoodd} that
\begin{align*}
\varphi_{1, \vv}({\bf x}) =1-2h_\vv( \|{\bf x}\|_{1}),
\quad {\bf x}\in \R^N.
\end{align*}
With the function $\varphi_{1, \vv}({\bf x})$ above, we consider the system $({\bf X}_t^{N}, {\bf X}_t^{N,N,\vv})_{\vv>0}$ determined  by  \eqref{EE0}, where $(X_0^{i,N,\varepsilon})_{1\le i\le N}=(X_0^{i,N})_{1\le i\le N}$ for any $\vv>0$, and $(X_0^{i}, X_0^{i,N})_{1\le i\le N}$
  are i.i.d.\ $\mathscr F_0^1$-measurable random variables.

We have the following statement.

\begin{proposition}\label{pro3}
Assume   $({\bf H}_{b,1})$ with  $\lambda_2>2\lambda_3$, $({\bf H}_{b,2})$  and $({\bf H}_{\bar \sigma}')$, and suppose
\begin{align}\label{E28}
\lambda_0^*:=\frac{c_2 \ell_0   }{1-\e^{-c_1\ell_0}+c_2\ell_0}\big( \lambda_1 \wedge (\lambda_2/2) \big)-\Big(1+
\frac{c_1}{c_2}\Big)\lambda_3 >0,
\end{align}
where
\begin{equation}\label{E0}
c_1:=\frac{\lambda_1\ell_0}{\sigma_0^2+\si_1^2},\qquad c_2:=c_1\e^{-c_1\ell_0}.
\end{equation}
Then,
there exists a  constant $C_0>0$ such that
\begin{equation}\label{E24}
\frac{1}{N}\sum_{i=1}^N\E | Z_t^{i,N,\vv}| \le  \e^{-\lambda_0^*t}\frac{C_0}{N}\sum_{j=1}^N\E | Z_0^{i,N,\vv}|+C_0\bigg(\frac{1}{N}\big(1+\E |X_0^1|\big)+\varphi(N)+   \vv \bigg),
\end{equation}
where $Z_t^{i,N,\vv}:=X_t^i-X_t^{i,N,\vv}$.
\end{proposition}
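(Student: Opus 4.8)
The plan is to run a quantitative, Eberle-type reflection-coupling argument for the particle-wise functional $u_t:=\frac1N\sum_{i=1}^N f\big(|Z_t^{i,N,\vv}|\big)$, where $f:[0,\8)\to[0,\8)$ is the concave increasing function
\[
f(r)=\int_0^r \e^{-c_1(s\wedge\ell_0)}\,\d s,\qquad\text{so } f''(r)=-c_1\e^{-c_1 r}\I_{\{r<\ell_0\}}\le -c_2\,\I_{\{r<\ell_0\}},
\]
and $\e^{-c_1\ell_0}r\le r f'(r)\le f(r)\le r$ for all $r\ge0$ (from concavity and $\e^{-c_1\ell_0}\le f'\le1$); here $c_1,c_2$ are as in \eqref{E0}. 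Writing $Z^i:=Z^{i,N,\vv}$, $r_i:=|Z^i|$ and $\bar r:=\frac1N\sum_j r_j=\|{\bf X}_t^{N}-{\bf X}_t^{N,N,\vv}\|_1$, and recalling $1-\varphi_{1,\vv}=2h_\vv(\bar r)$, the difference solves
\[
\d Z_t^i=\big(b(X_t^i,\mu_t^i)-b(X_t^{i,N,\vv},\hat\mu_t^{N,\vv})\big)\d t+2h_\vv(\bar r_t)\big(\si_1\d B_t^{1,i}+\si_0\d W_t\big)+\big(\bar\si(X_t^i)-\bar\si(X_t^{i,N,\vv})\big)\d B_t^{2,i},
\]
whose martingale part has quadratic variation $\big(4(\si_1^2+\si_0^2)h_\vv(\bar r_t)^2+(\bar\si(X_t^i)-\bar\si(X_t^{i,N,\vv}))^2\big)\d t$. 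I would apply It\^o's formula to $\frac1N\sum_i f(V_\delta(Z_t^i))$ with the smoothing $V_\delta$ of \eqref{W0}, and then send $\delta\downarrow0$.

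For the drift I would split $b(X^i,\mu^i)-b(X^{i,N,\vv},\hat\mu^{N,\vv})$ into the three pieces $\big(b(X^i,\mu^i)-b(X^i,\tilde\mu^{N,i})\big)$, $\big(b(X^i,\tilde\mu^{N,i})-b(X^{i,N,\vv},\tilde\mu^{N,i})\big)$ and $\big(b(X^{i,N,\vv},\tilde\mu^{N,i})-b(X^{i,N,\vv},\hat\mu^{N,\vv})\big)$, with $\tilde\mu_t^{N,i}=\frac1{N-1}\sum_{j\neq i}\delta_{X_t^j}$. The first is controlled in expectation by \eqref{E23} (the $X^i_t$ being conditionally i.i.d.\ as solutions of the non-interacting system), giving $\le\varphi(N)$ after $f'\le1$. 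The second is the spatial part: \eqref{E4} yields $f'(r_i)\operatorname{sgn}(Z^i)\big(b(X^i,\tilde\mu^{N,i})-b(X^{i,N,\vv},\tilde\mu^{N,i})\big)\le\frac12 f'(r_i)\big((\lambda_1+\lambda_2)r_i\I_{\{r_i\le\ell_0\}}-\lambda_2 r_i\big)$, i.e.\ $\le\frac{\lambda_1}{2}f'(r_i)r_i$ on $\{r_i\le\ell_0\}$ and $\le-\frac{\lambda_2}{2}f'(r_i)r_i$ on $\{r_i>\ell_0\}$. The third is $\le\lambda_3\mathbb W_1(\tilde\mu^{N,i},\hat\mu^{N,\vv})$ by \eqref{EE1}, and by triangle inequalities $\mathbb W_1(\tilde\mu^{N,i},\hat\mu^{N,\vv})\le\frac{N}{N-1}\bar r+\frac1{N(N-1)}\sum_{j\neq i}\big(|X^{j,N,\vv}|+|X^{i,N,\vv}|\big)$; after averaging over $i$ and taking expectations this is $\le\lambda_3\bar r(1+O(1/N))+\frac{C}{N}\lambda_3(1+\E|X_0^1|)$, where I invoke Lemma \ref{lem1} for $\sup_t\E|X_t^i|$ together with an identical uniform-in-$(t,N,\vv)$ moment bound for the $\vv$-approximate interacting system (both needing $\lambda_2>2\lambda_3$). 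Finally $\bar r\le\e^{c_1\ell_0}u=\frac{c_1}{c_2}u$.

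The diffusion contributes $2(\si_1^2+\si_0^2)h_\vv(\bar r)^2\cdot\frac1N\sum_i f''(r_i)$ (the $\bar\si$-part being $\le0$ by concavity and discarded). On $\{\bar r\ge2\vv\}$, $h_\vv(\bar r)^2=1$, and the per-particle quantity (spatial bad drift)$+$(reflection) on $\{r_i\le\ell_0\}$ is $\le\e^{-c_1 r_i}\big(\frac{\lambda_1}{2}r_i-2(\si_1^2+\si_0^2)c_1\big)=\e^{-c_1 r_i}\big(\frac{\lambda_1}{2}r_i-2\lambda_1\ell_0\big)\le-\frac32\lambda_1\ell_0\e^{-c_1 r_i}\le-\big(\lambda_0^*+(1+\tfrac{c_1}{c_2})\lambda_3\big)f(r_i)$ by the choice $c_1=\lambda_1\ell_0/(\si_0^2+\si_1^2)$ and elementary comparisons; on $\{r_i>\ell_0\}$ the dissipative term $-\frac{\lambda_2}{2}\e^{-c_1\ell_0}r_i$ satisfies the same bound, using $f(r_i)/r_i\le f(\ell_0)/\ell_0=\frac{1-\e^{-c_1\ell_0}}{c_1\ell_0}$. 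On $\{\bar r<2\vv\}$ the reflection is discarded and the whole spatial bad drift is bounded directly by $\frac{\lambda_1+\lambda_2}{2}\bar r<(\lambda_1+\lambda_2)\vv$. Collecting everything, the $\frac{c_1}{c_2}\lambda_3 u$ from the measure term (together with a further $\lambda_3 u$) is absorbed into the extra negative $-(1+\frac{c_1}{c_2})\lambda_3 f(r_i)$, leaving exactly the rate $\lambda_0^*>0$ from \eqref{E28}; taking expectations gives $\frac{\d}{\d t}\E u_t\le-\lambda_0^*\E u_t+C_0\big(\frac1N(1+\E|X_0^1|)+\varphi(N)+\vv\big)$, and Gr\"onwall together with $\e^{-c_1\ell_0}r\le f(r)\le r$ yields \eqref{E24}.

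The step I expect to be the main obstacle is the behaviour near the diagonal $\{Z^i=0\}$: since the reflection keeps $\si_1^2+\si_0^2>0$ of diffusivity on $Z^i$ whenever $\bar r>\vv$, a naive It\^o--Tanaka expansion of $|Z^i|$ produces a local-time term at $0$ with the wrong (positive) sign that is not of the error order $\vv$. Handling this cleanly requires working throughout with the smoothing $V_\delta$, a careful passage to the limit $\delta\downarrow0$, and crucially the fact that $h_\vv\equiv0$ on $[0,\vv]$, so that this boundary contribution is switched off and only leaves an $O(\vv)$ residue. A secondary difficulty is the structural mismatch between the reflection coefficient (a function of the mean $\bar r$) and the contraction term $f''(r_i)$ (a function of the individual distance $r_i$); this is precisely what the particle-wise functional $\frac1N\sum_i f(|Z^i|)$, combined with the crude but sufficient estimate $\frac1N\sum_i r_i\I_{\{r_i\le\ell_0\}}\le\bar r$ on the event $\{\bar r<2\vv\}$, is designed to absorb, and it is also the reason the argument does not obviously extend past the choice $\rho({\bf x})=\|{\bf x}\|_1$.
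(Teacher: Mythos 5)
Your proposal follows the paper's proof in all essential respects: the same asymptotic reflection coupling \eqref{EE0}, the same splitting of the drift into a spatial part handled by \eqref{E4}, measure parts handled by \eqref{EE1} and a removal-of-one-atom estimate, and the conditional propagation-of-chaos term controlled by \eqref{E23} together with the moment bound of Lemma \ref{lem1}; an Eberle-type concave function with the same constants $c_1,c_2$ (your $f(r)=\int_0^r\e^{-c_1(s\wedge\ell_0)}\,\d s$ versus the paper's $1-\e^{-c_1r}+c_2r$; your constant chase does reproduce $\lambda_0^*$); the same $O(\vv)$ treatment of the cutoff; and Gr\"onwall. One real deviation: you compare $\tilde\mu^{N,i}_t$ with $\hat\mu^{N,\vv}_t$ directly, which forces you to invoke a uniform-in-time first-moment bound for the $\vv$-interacting system $X^{i,N,\vv}$; the paper routes the comparison through $\tilde\mu^N_t$, so that only the non-interacting moments of Lemma \ref{lem1} are needed, and no uniform-in-time bound for $X^{i,N,\vv}$ is proved anywhere in the paper (it is provable by the same Lyapunov computation under $\lambda_2>2\lambda_3$, but it is an extra lemma you would have to supply).

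The concrete gap is in the step you yourself single out as the main obstacle. Your claim that the boundary (local-time) contribution at $Z^{i,N,\vv}_t=0$ is switched off because $h_\vv\equiv0$ on $[0,\vv]$ misreads the construction: $h_\vv$ is evaluated at the averaged distance $\|{\bf Z}_t^{N,N,\vv}\|_1$, not at the individual $|Z_t^{i,N,\vv}|$. Hence, when one difference sits at $0$ while the average exceeds $2\vv$, the martingale part of $Z^{i,N,\vv}$ still has quadratic variation at least $4(\si_0^2+\si_1^2)\,\d t$, so the $\delta\downarrow0$ limit of the positive term $\tfrac12 f'(V_\delta)\,\delta\,V_\delta^{-3}\,\d\<Z^{i,N,\vv}\>_t$ produced by your smoothing is (up to a constant) the semimartingale local time of $Z^{i,N,\vv}$ at $0$, which is neither zero nor of order $\vv$ and enters the expansion of $f(|Z^{i,N,\vv}_t|)$ with the unfavourable sign. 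As written, your argument therefore does not yield the differential inequality you feed into Gr\"onwall. Note also that this machinery is not what the paper does: the paper's proof applies the It\^o--Tanaka formula directly and records no local-time term in \eqref{E27}, so on this one point your write-up neither reproduces the paper's step nor disposes of the difficulty it flags; a complete proof along your lines would have to either show that this contribution is genuinely absent under the coupling (it is not switched off by the cutoff, for the reason above) or absorb it by a different device, which the proposal does not provide.
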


Below, we make a remark on the decay rate $\lambda_0^*$ given in \eqref{E28}.
\begin{remark}\label{Remark--ss} Note that \begin{align*}\lambda_0^*:=&\frac{c_1 \e^{-c_1\ell_0} \ell_0   }{1-\e^{-c_1\ell_0}+c_2\ell_0}\big( \lambda_1  \wedge (\lambda_2/2) \big)-\Big(1+
\frac{c_1}{c_2}\Big)\lambda_3\\
=&\left(\ell_0+\frac{\e^{c_1\ell_0}-1}{c_1}\right)^{-1}\big(  \lambda_1  \wedge (\lambda_2/2) \big)-(1+\e ^{c_1\ell_0})\lambda_3.\end{align*} It follows from the increasing property of the functions $c_1\mapsto \frac{\e^{c_1\ell_0}-1}{c_1}$ and $c_1\mapsto \e ^{c_1\ell_0}$ on $(0,\infty)$ that the non-degenerate property of $\sigma_0$ and $\si_1$ will facilitate the exponential contractivity of the difference process $(Z_t^{i,N})_{t\ge0}$.
On the other hand, it is easily seen from the expression above for the constant $\lambda_0^*$ that the smaller $\ell_0$ or the larger $\lambda_2$ implies the faster convergence rate in \eqref{E24}. Similarly, the larger $\lambda_1$ indicates the slower convergence rate in \eqref{E24}.
 \end{remark}

\begin{proof}[Proof of Proposition $\ref{pro3}$]
By the It\^o-Tanaka formula (see e.g. \cite[Theorem 142]{Situ}), besides  $(|x|)''=0$ for $x\neq 0,$
we derive that
\begin{equation}\label{E27}
\begin{split}
\d |Z_t^{i,N,\vv}|&=\frac{Z_t^{i,N,\vv}}{|Z_t^{i,N,\vv}|}\I_{\{Z_t^{i,N,\vv}\neq 0\}}\big(b(X_t^i,\mu_t^i)-b(X_t^{i,N,\vv},\hat\mu_t^{N,\vv})\big)\,\d t+\d  \tilde M_t^{i,N,\vv}\\
&\le \frac{Z_t^{i,N,\vv}}{|Z_t^{i,N,\vv}|}\I_{\{Z_t^{i,N,\vv}\neq0\}}\big(b(X_t^i,\tilde\mu_t^N)-b(X_t^{i,N,\vv},\hat\mu_t^{N,\vv})\big)\,\d t  \\
&\quad+ \big|b(X_t^i,\tilde{\mu}_t^{N,i})-b(X_t^i,\tilde\mu_t^{N})\big|\,\d t+ \big|b(X_t^i,\mu_t^i)-b(X_t^i,\tilde{\mu}_t^{N,i})\big|\,\d t +
\d  \tilde M_t^{i,N,\vv},
\end{split}
\end{equation}
where
\begin{align*}
\d  \tilde M_t^{i,N,\vv}:&=\mbox{sgn}\big(Z_t^{i,N,\vv}\big)\Big(2h_\vv\big(\|{\bf Z}_t^{N,N,\vv}\|_1\big)\big( \sigma_1  \d    B_t^{1,i}   + \sigma_0  \d W_t \big) + \big( \bar\si( X_t^i)- \bar\si(  X_t^{i,N,\vv})\big)\,\d   B_t^{2,i}\Big)\\
\end{align*}
with ${\bf Z}_t^{N,N,\vv}:=(Z_t^{1,N,\vv},\cdots, Z_t^{N,N,\vv})$,
and
\begin{align*}
\tilde\mu_t^N:=\frac{1}{N}\sum_{j=1}^N\delta_{X_t^j},\quad \tilde{\mu}_t^{N,i}:=\frac{1}{N-1}\sum_{j=1:j\neq i}^N\delta_{X_t^j}.
\end{align*}

By means of \eqref{E4} and \eqref{EE1}, it follows that
\begin{equation}\label{E25}
\begin{split}
 &\frac{Z_t^{i,N,\vv}}{|Z_t^{i,N,\vv}|}\I_{\{Z_t^{i,N,\vv}\neq0\}}\big(b(X_t^i,\tilde\mu_t^N)-b(X_t^{i,N,\vv},\hat\mu_t^{N,\vv})\big)\\
 &= \frac{Z_t^{i,N,\vv}}{|Z_t^{i,N,\vv}|}\I_{\{Z_t^{i,N,\vv}\neq0\}}\big(b(X_t^i,\tilde\mu_t^N)-b(X_t^{i,N,\vv},\tilde\mu_t^N)\big)\\
 &\quad+\frac{Z_t^{i,N,\vv}}{|Z_t^{i,N,\vv}|}\I_{\{Z_t^{i,N,\vv}\neq0\}}\big(b(X_t^{i,N,\vv},\tilde\mu_t^N)-b(X_t^{i,N,\vv},\hat\mu_t^{N,\vv})\big)\\
 &\le  \frac{1}{2}(\lambda_1+\lambda_2)|Z_t^{i,N,\vv}| \I_{\{| Z_t^{i,N,\vv} |\le\ell_0\}} -\frac{1}{2}\lambda_2| Z_t^{i,N,\vv}| +\frac{\lambda_3 }{N}\sum_{j=1}^N|Z_t^{j,N,\vv}|.
 \end{split}
\end{equation}
Using  the fact that  for $\mu\in\mathscr P_1(\R^d)$ and $x\in\R^d,$
\begin{align*}
\mathbb W_1\Big(\frac{N-1}{N}\mu+\frac{1}{N}\delta_x,\mu\Big)\le \frac{1}{N}\big(|x|+\mu(|\cdot|)\big),
\end{align*}
which can be attainable  analogously as \cite[Lemma 3.1]{Szpruch},  in addition to \eqref{EE1} and
\begin{align*}
\tilde\mu_t^{N}=\frac{N-1}{N}\tilde{\mu}_t^{N,i}+\frac{1}{N}\delta_{X_t^i}, \quad t\ge0, \, i\in\mathbb S_N,
\end{align*}
implies that
\begin{equation}\label{E26}
\begin{split}
\big|b(X_t^i,\tilde\mu_t^{N})-b(X_t^i,\tilde{\mu}_t^{N,i})\big|&\le \lambda_3\mathbb W_1(\tilde{\mu}_t^{N,i},\tilde\mu_t^{N})\\
&\le \frac{\lambda_3}{N}\bigg(|X_t^i|+\frac{1}{N-1}\sum_{j=1:j\neq i}^N|X_t^j|\bigg).
\end{split}
\end{equation}
Thus, plugging \eqref{E25} and \eqref{E26} back into   \eqref{E27}  yields that
\begin{equation*}\label{EE2}
\begin{split}
\d |Z_t^{i,N,\vv}|
&\le \frac{1}{2}\Big ( (\lambda_1+\lambda_2 )|Z_t^{i,N,\vv}| \I_{\{| Z_t^{i,N,\vv} |\le\ell_0\}} - \lambda_2| Z_t^{i,N,\vv}|  \Big)\,\d t\\
&\quad+ \bigg( J^i({\bf X}_t^N)+\frac{ \lambda_3 }{ N}\sum_{j=1}^N|Z_t^{j,N,\vv}|\bigg)\,\d t
+\d  \tilde M_t^{i,N,\vv},
\end{split}
\end{equation*}
where
\begin{align*}
J_i({\bf X}_t^N):=\frac{\lambda_3}{N}\bigg(|X_t^i|+\frac{1}{N-1}\sum_{j=1:j\neq i}^N|X_t^j|\bigg)\,\d t+ \big|b(X_t^i,\mu_t^i)-b(X_t^i,\tilde{\mu}_t^{N,i})\big|.
\end{align*}

Define the function
\begin{align*}
f(r) =1-\e^{-c_1r}+c_2r,\qquad  r\ge0,
\end{align*}
where $c_1,c_2>0$ were defined in \eqref{E0}.
Next,
 applying It\^o's formula to the function $f$ above,
we deduce  from $f'>0$ and $f''<0$  that
\begin{equation*}
\begin{split}
\d \big(\e^{\lambda_0^* t}f(| Z_t^{i,N,\vv}|)\big)
\le\e^{\lambda_0^* t}\bigg(&\lambda_0^* f(| Z_t^{i,N,\vv}|)+\psi(| Z_t^{i,N,\vv}|) h_\vv\big(\|{\bf Z}_t^{N,N,\vv}\|_1\big)^2\I_{\{Z_t^{i,N,\vv}\neq 0\}}+\Upsilon_i({\bf Z}_t^{N,N,\vv})\\
 &+f'(| Z_t^{i,N,\vv}|)\Big(J_i({\bf X}_t^N)+\frac{\lambda_3 }{ N}\sum_{j=1}^N|Z_t^{j,N,\vv}|\Big)\bigg)\,\d t +\d  \bar M_t^{i,N,\vv}
\end{split}
\end{equation*}
for some martingale $(\bar M_t^{i,N,\vv})_{t\ge0}$,
where $\lambda_0^*>0$ was defined in \eqref{E28},
\begin{align*}
\psi(r):&=\frac{1}{2}f'(r)\big((\lambda_1+\lambda_2)\I_{\{r\le\ell_0\}}-  \lambda_2 \big)r+2(\si_0^2+\sigma^2_1)  f''(r),\quad r\ge0
\end{align*}
and
\begin{align*}
\Upsilon_i({\bf Z}_t^{N,N,\vv}):=\frac{1}{2}f'(| Z_t^{i,N,\vv}|)\Big( (\lambda_1+\lambda_2 )|Z_t^{i,N,\vv}| \I_{\{| Z_t^{i,N,\vv} |\le\ell_0\}} - \lambda_2| Z_t^{i,N,\vv}|\Big)\big (1-h_\vv\Big(\|{\bf Z}_t^{N,N,\vv}\|_1\big)^2\Big).
\end{align*}

By virtue of
\begin{equation*}
 f'(r)= c_1\e^{-c_1r}+c_2, \quad \quad  f''(r)= -c_1^2\e^{-c_1r},\qquad r\ge0,
 \end{equation*}
and  the alternatives of $c_1$ and $c_2$ given in \eqref{E0}, for any $r\le\ell_0,$ we have
\begin{align*}
\psi(r)
\le  -c_1^2\e^{-c_1\ell_0}      (\si_0^2+\sigma_1^2) \le -\frac{c_1^2\e^{-c_1\ell_0}      (\sigma_0^2+\sigma^2_1) }{1-\e^{-c_1\ell_0}+c_2\ell_0}f(r)=-\frac{c_1c_2      (\sigma_0^2+\sigma^2_1) }{1-\e^{-c_1\ell_0}+c_2\ell_0}f(r).
\end{align*}
On the other hand, for the case $r\ge \ell_0,$ we infer  that
\begin{align*}
\psi(r)
=-\frac{1}{2}  \lambda_2 (c_1\e^{-c_1r}+c_2)r-2c_1^2(\si_0^2+\sigma_1^2) \e^{-c_1r}
&\le - \frac{\lambda_2 c_2r}{2(1-\e^{-c_1 r}+c_2r)} f(r)\\
&\le -\frac{c_2  \lambda_2 \ell_0}{2(1-\e^{-c_1\ell_0}+c_2\ell_0)}f(r),
\end{align*} where in the last inequality we used the fact that the function $r\mapsto\frac{r}{1-\e^{-c_1 r}+c_2r}$ is increasing on $(0,\infty)$.
Therefore, we arrive at
\begin{equation*}
\psi(r)\le -\bigg(\frac{c_1c_2     (\si_0^2+\sigma_1^2) }{1-\e^{-c_1\ell_0}+c_2\ell_0}\wedge \frac{c_2  \lambda_2 \ell_0}{2(1-\e^{-c_1\ell_0}+c_2\ell_0)}\bigg)f(r)=:-\lambda_0^{**} f(r),\quad r\ge0.
\end{equation*}
This, along with  $c_2\le f'(r)\le c_1+c_2$,  implies  that
\begin{equation*}
\begin{split}
 \e^{\lambda_0^*  t}\E f(| Z_t^{i,N,\vv}|)
\le \E f(| Z_0^{i,N,\vv}|)
+\int_0^t\e^{\lambda_0^*  s}\bigg[ & - C_1\bigg(\E f(| Z_s^{i,N,\vv}|)- \frac{1}{N}  \sum_{j=1}^N\E f(|Z_s^{j,N,\vv}|)\bigg)\\
&+\lambda_0^{**}\E\bigg(f(| Z_s^{i,N,\vv}|)\Big (1-h_\vv\big(\|{\bf Z}_s^{N,N,\vv}\|_1\big)^2\Big)\bigg)\\
&+(c_1+c_2)\E J_i({\bf X}_s^N)+\E\Upsilon_i({\bf Z}_s^{N,N,\vv})\bigg]\,\d s,
\end{split}
\end{equation*}
where $C_1:=\lambda_3(1+c_1/c_2)$. Next,
combining with
\begin{align*}
\E J_i({\bf X}_t^N)\le & \frac{\lambda_3}{N}\bigg(\E|X_t^i|+\frac{1}{N-1}\sum_{j=1:j\neq i}^N\E|X_t^j|\bigg)+ \varphi(N)\\
\le & \frac{C_2}{N}\big(1+\E|X_0^1|\big)+ \varphi(N)
\end{align*}
for some constant $C_2>0$, thanks to  \eqref{E23} and Lemma \ref{lem1},
 we deduce that
for some  $C_3>0,$
\begin{equation*}
\begin{split}
 \e^{\lambda_0^* t}\frac{1}{N}\sum_{i=1}^N\E f(| Z_t^{i,N,\vv}|)
&\le \frac{1}{N}\sum_{i=1}^N\E f(| Z_0^{i,N,\vv}|)
 + C_3\bigg( \frac{1}{N}\big(1+\E |X_0^1|\big) +\varphi(N) \bigg)\int_0^t\e^{\lambda_0^* s}\,\d s\\
&\quad+\lambda_0^{**}\int_0^t\e^{\lambda_0^* s}\E\bigg(\frac{1}{N}\sum_{i=1}^Nf(| Z_s^{i,N,\vv}|)\Big (1-h_\vv\big(\|{\bf Z}_s^{N,N,\vv}\|_1\big)^2\Big)\bigg)\,\d s\\
&\quad+\frac{1}{N}\sum_{i=1}^N\int_0^t\e^{\lambda_0^* s}\E\Upsilon_i({\bf Z}_s^{N,N,\vv})\,\d s.
\end{split}
\end{equation*}

By invoking  $c_2\le f'(r)\le c_1+c_2$ and $f(0)=0$, in addition to $h_\vv\in[0,1]$,  we find that for all $s>0$,
\begin{equation*}
\begin{split}
&\frac{1}{N}\sum_{i=1}^Nf(| Z_s^{i,N,\vv}|)\Big (1-h_\vv\big(\|{\bf Z}_s^{N,N,\vv}\|_1\big)^2\Big)+\frac{1}{N}\sum_{i=1}^N  \Upsilon_i({\bf Z}_s^{N,N,\vv})\\
&\le  (c_1+c_2)(2+\lambda_1
) \|{\bf Z}_s^{N,N,\vv}\|_1\big( 1-h_\vv\big( \|{\bf Z}_s^{N,N,\vv}\|_1\big) \big)\\
&\le 2(c_1+c_2)(2+\lambda_1
)\vv,
\end{split}
\end{equation*}
where in the last display we used the fact that
\begin{align*}
r(1-h_\vv(r))\le 2\vv,\qquad r\ge0
\end{align*}
by taking the definition of the function $h_\vv$ into consideration.  Thus, we derive that for some constant  $C_4>0$,
\begin{equation*}
\begin{split}
 \e^{\lambda_0^* t}\frac{1}{N}\sum_{i=1}^N\E f(| Z_t^{i,N,\vv}|)
&\le \frac{1}{N}\sum_{i=1}^N\E f(| Z_0^{i,N,\vv}|)
  + C_4\bigg(\frac{1}{N}\big(1+\E |X_0^1|\big) +\varphi(N)+\vv \bigg)\e^{\lambda_0^* t}.
\end{split}
\end{equation*}
Consequently, according to $c_2\le f'(r)\le c_1+c_2$ again and $f(0)=0$, there is a constant $C_5>0$ so that for all $t>0$, $$\frac{1}{N}\sum_{i=1}^N\E | Z_t^{i,N,\vv}| \le  \e^{-\lambda_0^*t}\frac{C_5}{N}\sum_{i=1}^N\E | Z_0^{i,N,\vv}|+C_5\bigg(\frac{1}{N}\big(1+\E |X_0^1|\big)+\varphi(N)+ \vv\bigg),$$ and so the desired assertion follows directly.
\end{proof}

Before we proceed, we make an  additional  comment.
\begin{remark}
We turn to the case that $d\ge 2$. Observe that the functions $\rho $ and $\varphi $ involved in $\phi_{d,\vv}$ are undetermined.
 By applying the It\^o-Tanaka formula to the radial process $|Z_t^{i,N,\vv}|$, it is easy to see that the   quadratic variation term
\begin{equation*}
\Upsilon_i(t):=\frac{2}{ |Z_t^{i,N,\vv}|^3}\I_{\{Z_t^{i,N,\vv}\neq{\bf0}\}}h_\vv(\rho({\bf Z}_t^{N,N,\vv})))^2\<|Z_t^{i,N,\vv}|^2I_d-Z_t^{i,N,\vv}\otimes Z_t^{i,N,\vv}, {\bf n}(\varphi({\bf Z}_t^{N,N,\vv}))\otimes {\bf n}(\varphi({\bf Z}_t^{N,N,\vv}))\>_{\rm HS}
\end{equation*}
arises naturally, where ${\bf Z}_t^{N,N,\vv} :={\bf X}_t^{N}-{\bf X}_t^{N,N,\vv}.$
Obviously,
in order to kill the term $\Upsilon_i(t)$ for any $i\in\mathbb S_N$ as needed in the proof above,   we cannot  choose $\varphi({\bf Z}_t^{N,N,\vv})$, which are dependent on all particles, by noting that the term $|Z_t^{i,N,\vv}|^2I_d-Z_t^{i,N,\vv}\otimes Z_t^{i,N,\vv}$ appears  in the inner product. Nevertheless, provided that  we take $\varphi({\bf Z}_t^{N,N,\vv})$, which is dependent merely on the $i$-th  component $Z_t^{i,N,\vv}$, the proof of Proposition \ref{pro1} is unavailable; see Remark \ref{remark:coup} for more explanations. The above further explains
why we focus merely on the $1$-dimensional SDE \eqref{E1} rather than the multi-dimensional setting.
\end{remark}

Based on the previous warm-up preparations, we start to complete the
\begin{proof}[Proof of Theorem $\ref{thm2}$]
Given $\mu,\nu\in \mathscr P_1(\R)$,   by
the
existence of optimal couplings,  there is $\pi^*\in\mathscr C(\mu,\nu)$ such that
\begin{equation}\label{E29}
\mathbb W_1(\mu,\nu)=  \int_{\R\times\R}|x-y|\pi^*(\d x,\d y).
\end{equation} Let
  $(X_t^{i,\mu})_{t\ge0}$ and $(X_t^{i,N,\nu})_{t\ge0}$ be respective solutions to \eqref{E6} and \eqref{E7}, where   $(X_0^{i,\mu},X_0^{i,N,\nu} )_{1\le i\le N}$
 are i.i.d. $\mathscr F_0^1$-measurable random variables  
 such that  $\mathscr L_{(X_0^i,X_0^{i,N,\nu})}=\pi^* $.  In particular, $\mathbb W_1(\mu,\nu)= \E|X_0^i-X_0^{i,N}|$, and
 the common distributions of $X_0^{i,\mu}$ and $X_0^{i,N,\nu}$ are just $\mu$ and $\nu$, respectively.

Via the triangle inequality, it is easy to see that for all $t>0$,
\begin{equation}\label{e:ppff1}\begin{split}
&\mathcal W_1(\mu_t,\nu_t)
\le \E^0\mathbb W_1(\mu_t,\nu_t)\\
&\le \E^0 \left(\E^1\mathbb W_1\bigg(\mu_t,\frac{1}{N}\sum_{j=1}^N\delta_{X_t^{j,\mu}}\bigg)\right)+\E^0\left(\E^1\mathbb W_1\bigg(\frac{1}{N}\sum_{j=1}^N\delta_{X_t^{j,\mu}},\frac{1}{N}\sum_{j=1}^N\delta_{X_t^{j,N,\nu}}\bigg)\right)\\
&\quad+\E^0\left(\E^1\mathbb W_1\bigg(\frac{1}{N}\sum_{j=1}^N\delta_{X_t^{j,N,\nu}},\frac{1}{N}\sum_{j=1}^N\delta_{X_t^{j,\nu}}\bigg)\right) +\E^0\left(\E^1\mathbb W_1\bigg(\nu_t,\frac{1}{N}\sum_{j=1}^N\delta_{X_t^{j,\nu}} \bigg)\right)\\
&= \E \mathbb W_1\bigg(\mu_t,\frac{1}{N}\sum_{j=1}^N\delta_{X_t^{j,\mu}}\bigg)+\E\mathbb W_1\bigg(\frac{1}{N}\sum_{j=1}^N\delta_{X_t^{j,\mu}},\frac{1}{N}\sum_{j=1}^N\delta_{X_t^{j,N,\nu}}\bigg)\\
&\quad+\E\mathbb W_1\bigg(\frac{1}{N}\sum_{j=1}^N\delta_{X_t^{j,N,\nu}},\frac{1}{N}\sum_{j=1}^N\delta_{X_t^{j,\nu}}\bigg) +\E\mathbb W_1\bigg(\nu_t,\frac{1}{N}\sum_{j=1}^N\delta_{X_t^{j,\nu}} \bigg)\\
&=:\Gamma_1(t,N)+\Gamma_2(t,N)+\Gamma_3(t,N)+\Gamma_4(t,N).
\end{split}\end{equation}

In the subsequent analysis, we estimate the terms $\Gamma_i(t,N), i=1,2,3,4$, separately.
Obviously, the Assumption (${\bf H}_{\bar\sigma}')$ implies the Assumption (${\bf A}_{\bar\sigma})$.
Note that for all $i\in \mathbb S_N$, $(X_t^{i,\mu})_{t\ge0}$  (resp. $(X_t^{i,N,\nu})_{t\ge0}$) shares the same initial value so that
$\E|X_0^{i,\mu}|<\infty$ (resp. $\E|X_0^{i,\nu}|<\8$).
Therefore,  an application of  Proposition \ref{pro0} yields that
\begin{align*}
\lim_{N\to\8}\big(\Gamma_1(t,N)+\Gamma_4(t,N)\big)=0.
\end{align*}
Next, note that
\begin{align*}
\Gamma_3(t,N)\le\frac{1}{N}\sum_{j=1}^N\E|X_t^{j,\nu}-X_t^{j,N,\nu}|=\E |X_t^{1,\nu}-X_t^{1,N,\nu}|,
\end{align*}
where the identity is due to the fact that $(X_t^{i,\nu},X_t^{i,N,\nu})$ and $(X_t^{j,\nu},X_t^{j,N,\nu})$ are identically distributed
thanks to   $(X_0^{i,\mu},X_0^{i,N,\nu} )_{1\le i\le N}$
  are i.i.d. $\mathscr F_0^1$-measurable random variables.  
  Whereafter, applying Proposition \ref{pro0} once more enables us to derive that
\begin{align*}
\lim_{N\to\8}\Gamma_3(t,N)=0.
\end{align*}

Consider the system \eqref{EE0} associated with the processes $(X_t^{i,\mu})_{t\ge0}$ and $(X_t^{i,N,\nu})_{t\ge0}$, which are respective solutions to \eqref{E6} and \eqref{E7}.
Denote by $({\bf X}^{N},{\bf X}^{N,N,\vv})$ the solution to the system \eqref{EE0}. 
Evidently, the Assumptions  $({\bf H}_{b,1})$ and $({\bf H}_{\bar\sigma}')$
imply $({\bf A}_{b })$ and $({\bf A}_{\bar\sigma})$. So, according to Proposition \ref{pro1}, $({\bf X}^N, {\bf X}^{N,N,\vv})$ has a weakly convergent subsequence such that the corresponding weak limit process is the coupling process of ${\bf X}^N$ and ${\bf X}^{N,N}$. In the following analysis, for the sake of notation simplicity,  we shall still write $({\bf X}^N,{\bf X}^{N,N})$ as the associated weak limit process.  Furthermore, it is ready to see that  there exists a constant $\lambda_3^*<\lambda_2/2$ such that \eqref{E28} is true for any $\lambda_3\in[0,\lambda_3^*].$ Thus, employing Proposition \ref{pro3}
and  $X_0^{i,N,\vv}=X_0^{i,N}$,
 we derive that there exists a  constant $C^\star>0$ such that
\begin{align*}
 \Gamma_2(t,N)\le &C^\star \Big(\e^{-\lambda_0^*t} \E|X_0^{i,\mu}-X_0^{i,N,\nu}|+\frac{1}{ N}\big(1+\E |X_0^1|\big)+\varphi(N)\Big)\\
 =& C^\star \Big(\e^{-\lambda_0^*t} \mathbb W_1(\mu,\nu)
 +\frac{1}{ N}\big(1+\E |X_0^1|\big)+\varphi(N)\Big).
\end{align*}
This, together with the prerequisite $\lim_{N\to\8}\varphi(N)=0$, leads to
\begin{align*}
\limsup_{N\to\8} \Gamma_2(t,N)\le C^\star\e^{-\lambda_0^*t}
\mathbb W_1(\mu,\nu).
\end{align*}

At last, by putting together  the estimates concerning  $\Gamma_i(t,N)$, $i=1,\cdots,4,$  we accomplish the proof of Theorem \ref{thm2}.
\end{proof}

We now can present  the proof of Theorem \ref{thm1} on the basis of Theorem \ref{thm2}.
\begin{proof}[Proof of Theorem $\ref{thm1}$]
As we elaborated in the second paragraph of this section, in order to investigate   ergodicity  of the measure-valued process $(\mu_t)_{t>0}$ associated with  \eqref{E1}, it is sufficient to consider the McKean-Vlasov SDE with common noise \eqref{EE6}. Based on Theorem \ref{thm2}, it remains to examine
the Assumptions imposed in Theorem \ref{thm2} with $\si_1=\ss{\alpha\kk_{\sigma,1}}$ and $\bar \sigma(x)=\bar \sigma_\alpha(x)$, separately. Concerning the drift $b$, the same assumptions are set in Theorems \ref{thm1} and \ref{thm2}. So, the validation on the drift $b$ is trivial.

Define the set
\begin{equation*}
\Lambda_\sigma=\Big\{\alpha>0: \inf_{x\in\R}\bar\sigma_\alpha(x)>0\Big\},
\end{equation*}
where  $\bar\sigma_\aa(x)=(\si(x)^2-\aa\kk_{\sigma,1})^{{1}/{2}}$ (see \eqref{EE7} for details). Below, we fix $\alpha\in\Lambda_\sigma$.
By virtue of  (${\bf H}_\sigma$), we deduce that  for   $x,y\in\R,$
\begin{align*}
|\bar\si_\aa(x)-\bar\si_\aa(y)|\le 2\ss{\kk_{\sigma,2}},
\end{align*}
and that for $x,y\in \R$,
\begin{align*}
|\bar\si_\aa(x)-\bar\si_\aa(y)|=\frac{|\si(x)^2-\si(y)^2|}{\bar\si_\aa(x)+\bar\si_\aa(y)}&\le \frac{(|\si(x)|+|\si(y)|)|\si(x)-\si(y)|}{\bar\si_\aa(x)+\bar\si_\aa(y)}\\
&\le \frac{L_\sigma\ss{\kk_{\sigma,2}}}{\inf_{x\in\R}\bar\si_\alpha(x)}|x-y|.
\end{align*}
Therefore, we arrive at
\begin{align*}
|\bar\si_\aa(x)-\bar\si_\aa(y)|\le \bigg(\big(2\ss{\kk_{\sigma,2}}\big)\vee\frac{L_\sigma\ss{\kk_{\sigma,2}}}{\inf_{x\in\R}\bar\si_\alpha(x)}\bigg)\big(1\wedge |x-y|\big),\quad x,y\in\R.
\end{align*}
Whence,  the Assumption (${\bf H}_{\bar\sigma_\alpha}'$) holds true with
$$L_{\bar\si_\alpha}'=\big(2\ss{\kk_{\sigma,2}}\big)\vee\frac{L_\sigma\ss{\kk_{\sigma,2}}}{\inf_{x\in\R}\bar\si_\alpha(x)}.$$
Furthermore, with $\si_1=\ss{\alpha\kk_{\sigma,1}}$ and $\bar \sigma(x)=\bar \sigma_\alpha(x)$ at hand, there exists a positive constant $\lambda_3^*<\lambda_2/2$ such that $\lambda_0^*>0$
for all $\lambda_3\in(0,\lambda_3^*]$, where $\lambda_0^*$ was introduced in \eqref{E28}.

In a word, all of the sufficiency conditions in Theorem \ref{thm2} are fulfilled  and therefore the proof of Theorem \ref{thm1} is complete.
\end{proof}

Before the end of this section, we make some further comments on the comparison concerned with our main result and  that in \cite[Section 4]{Maillet} for $d=1$, and our approach for the high dimensional setting (i.e., for $d\ge2$).

\begin{remark}\label{R:3.6} We compare
Theorem \ref{thm1} with the counterpart of \cite[Section 4]{Maillet} based on the following four aspects:
\begin{itemize}
\item {\it Framework}: In \cite[Section 4]{Maillet}, the drift $b(x,\mu)=-V'(x)+\int_{\R}W'(x-y)\mu(\d y)$, where both $V'$ and $W'$ are of {\it linear growth}. Whereas, in our setting, the drift $b$ is much more general and is allowed to be of polynomial growth with respect to the spatial variables. Moreover, in \cite[Section 4]{Maillet}, the   idiosyncratic noise is additive. However, in the present work, the  idiosyncratic noise is multiplicative.

\item {\it Contribution of noises}: As shown in Proposition \ref{pro3} and Remark \ref{Remark--ss},
not only the common noise but also the idiosyncratic noise make contributions to the exponential ergodicity of the measure-valued process $(\mu_t)_{t>0}$. Nevertheless, in \cite[Section 4]{Maillet}, the common noise makes the sole contribution to the ergodicity of $(\mu_t)_{t>0}$.

\item {\it Construction of the asymptotic coupling by reflection}: In  general, we can decompose the noise part to construct (asymptotic) coupling by reflection when the underlying  SDEs (including McKean-Vlasov SDEs) are partially dissipative as indicated in \cite[Section 4]{Maillet}. However, regarding McKean-Vlasov SDEs with common noise, if we adopt the previous procedure, then the  common noise will become not explicit and moreover
     change  drastically so the measure-valued process $(\mu_t)_{t>0}$ will satisfy a different nonlinear stochastic Fokker-Planck equation.
    Moreover, in order to carry out the proof of \cite[Theorem 2]{Maillet}, the identity \cite[(26)]{Maillet} is vital.
    Unfortunately, there is a gap to derive \cite[(26)]{Maillet} by invoking  the following  SDE (see \cite[Proposition 6]{Maillet} for more details):
    \begin{align}\label{EE}
    d |E_t^{i,N,\delta}|=-e_t^{i,N,\delta}\big(V'(X_t^{i,\delta})-V'(X_t^{i,N,\delta})\big)\d t+A_t^{i,N,\delta}\d t+2\si_0\pi_\delta(E_t^{N,\delta})(e_t^{i,N,\delta})^T\d B_t^0 ,
    \end{align}
where   $\pi_\delta(E_{t}^{N,\delta})^2\I_{\{E_t^{i,N,\delta}\neq 0\}}\neq  \pi_\delta(E_{t}^{N,\delta})^2$. Most importantly, we would like to emphasize that, unlike \cite[Lemma 7]{DEGZ},   the SDE \eqref{EE} cannot be derived via an approximate strategy as shown in
    \cite[Appendix A.5]{Maillet}, where in particular the identity in \cite[p.\ 28]{Maillet} is not true since the variables involved in functions $\pi_\delta$
   and $\psi_a$  are not consistent.
 Based on previous viewpoints, we build a totally novel asymptotic coupling by reflection as demonstrated in \eqref{EE0}.

 \item {\it Moment on initial distributions}:     To investigate ergodicity of $(\mu_t)_{t>0}$ under the Wasserstein distance $\mathcal W_1,$ it is quite reasonable to require $\mathscr L_{\mu_0}\in L_1(\mathscr P(\R))$, which is imposed in Theorem \ref{thm1}. However, $\mathscr L_{\mu_0}\in L_4(\mathscr P(\R))$ was set in \cite[Corollary 3]{Maillet} which is  concerned with exponential convergence under the $\mathcal W_1$-Wasserstein distance.
\end{itemize}
\end{remark}

\begin{remark}\label{remark} The proof of the exponential ergodicity for the measured process $(\mu_t)_{t>0}$ relies on  the inequality \eqref{e:ppff1}, where the terms $\Gamma_1(t,N)$, $\Gamma_3(t,N)$ and $\Gamma_4(t,N)$ can be handled similarly for $d\ge2$ due to the fact that Proposition \ref{pro1} holds for all $d\ge1$. Therefore, the main task is to estimate
$\Gamma_2(t,N)$. For this, once more,  we need to make use of the asymptotic coupling by reflection constructed in Subsection \ref{section2.2}. Different from the one-dimensional case, for the setting $d\ge2$,
  we can take  $\varphi({\bf x})= \bar{\bf x}:=\frac{1}{N}\sum_{j=1}^Nx_j$ and $\rho({\bf x})= |\bar{\bf x} |$. Note that the averaged process $\bar Z_t^{N,\vv}:=\frac{1}{N}\sum_{j=1}^N(X_t^j-X_t^{j,N,\varepsilon})$ solves the following SDE:
\begin{align*}
\d \bar Z_t^{N,\vv} &=\frac{1}{N}\sum_{i=1}^N\big(b(X_t^i,\mu_t^i)-b(X_t^{i,N,\vv},\hat \mu_t^{N,\vv})\big)\,\d t\\
&\quad+2h_\vv(|\bar Z_t^{N,\vv}|){\bf n}(\bar Z_t^{N,\vv})\otimes {\bf n}(\bar Z_t^{N,\vv})\bigg(\frac{1}{N}\sum_{i=1}^N\si_1\,\d B_t^{1,i} +\si_0 \,\d W_t\bigg).
\end{align*}
Whence, to derive the long-term estimate on the quantity  $|\bar Z_t^{N,\vv}|$, a special structure concerning the drift $b$ (e.g., $b(x,\mu)=-x+b_0(x,\mu)$ for some $b_0:\R^d\times\mathscr P(\R^d)\to\R^d$)   need to be required, which undoubtedly restrict applications of the theory derived. Furthermore,
 to achieve our aim,  it is also necessary  to quantitatively estimate the uniform-in-time distance between   each component process $X_t^i $ (resp. $X_t^{i,N,\vv}$) and the averaged process $\frac{1}{N}\sum_{j=1}^N X_t^j $ (resp. $\frac{1}{N}\sum_{j=1}^N X_t^{j,N,\vv} $); see \cite[Proposition 8]{Maillet} for related details. Unfortunately, such an estimate necessitates to require $\si_1=0.$
  This further reduces   practical applications of  the main result. In particular, when $b(x,\mu)=-V'(x)-2\alpha\int_{\R^d}(x-y)\mu(\d y) $ and $\si_1=0,$
 where $V':\R^d\to\R^d$ is globally Lipschitz and $\alpha>0,$ \cite[Theorem 3]{Maillet} derived Theorem \ref{thm1} for the multi-dimensional setup.
 One can refer to \cite[Section 5]{Maillet} for related discussions.
\end{remark}

\section{Appendix}
This Appendix section is devoted to providing a sufficiency condition to guarantee that the Assumption $({\bf H}_{b,2})$ is valid.

\begin{lemma}
Let   $(X^i_t)_{1\le i\le N}$ be conditionally independent and identically distributed    under the filtration $\mathscr F_t^W$ and
 $b(x,\mu)=\displaystyle\int_{\R}b_0(x-y)\,\mu(\d y)$ for some Lipschitz continuous function $b_0:\R\to\R$. Then, there exists a constant $C_0>0$ such that for all $i\in\mathbb S_N,$
\begin{align}\label{E10}
 \E|b(X^i_t,\mu_t^i)-b(X^i_t,\tilde \mu^{N,i}_t)|^2\le \frac{C_0}{N}\big(1+\E|X_t^i|^2\big),
\end{align} where $\tilde \mu^{N,i}_t:=\frac{1}{N-1}\sum_{j=1:j\neq i}^N\delta_{X^j_t}$.
In particular, $({\bf H}_{b,2})$ holds true with
  $$\varphi(N):=\frac{\ss{C_0}}{\ss N}\Big(1+\sup_{t\ge0}(\E|X_t^i|^2)^{{1}/{2}}\Big) $$
in case of $\sup_{t\ge0}(\E|X_t^i|^2)^{{1}/{2}}<\8.$
\end{lemma}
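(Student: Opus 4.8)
The plan is to exploit the conditional independence of the particles in order to identify the left-hand side of \eqref{E10} with the conditional variance of an empirical mean, which automatically yields the gain of order $1/N$; this is the conditional counterpart of the classical variance estimate underlying quantitative propagation of chaos.

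First I would fix $i\in\mathbb S_N$, pick any $j_0\ne i$, and set $\mathscr G_t^i:=\sigma\big(\mathscr F_t^W\cup\sigma(X_t^i)\big)$. Since $(X_t^j)_{1\le j\le N}$ are conditionally i.i.d.\ given $\mathscr F_t^W$, their common conditional law is $\mu_t^i$ and, given $\mathscr F_t^W$, $X_t^i$ is independent of $X_t^{j_0}$; hence the conditional law of $X_t^{j_0}$ given $\mathscr G_t^i$ is still $\mu_t^i$, so that
\begin{align*}
b(X_t^i,\mu_t^i)=\int_\R b_0(X_t^i-y)\,\mu_t^i(\d y)=\E\big[b_0(X_t^i-X_t^{j_0})\,\big|\,\mathscr G_t^i\big].
\end{align*}
Writing $Y_t^{i,j}:=b_0(X_t^i-X_t^j)-\E[b_0(X_t^i-X_t^j)\,|\,\mathscr G_t^i]$ for $j\ne i$, this gives
\begin{align*}
b(X_t^i,\tilde\mu_t^{N,i})-b(X_t^i,\mu_t^i)=\frac{1}{N-1}\sum_{j=1:j\ne i}^N Y_t^{i,j}.
\end{align*}

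Next I would note that enlarging the conditioning from $\mathscr F_t^W$ to $\mathscr G_t^i$ does not destroy the conditional independence and identical distribution of $(X_t^j)_{j\ne i}$, because $X_t^i$ is conditionally independent of $(X_t^j)_{j\ne i}$ given $\mathscr F_t^W$; consequently $(Y_t^{i,j})_{j\ne i}$ are conditionally i.i.d.\ given $\mathscr G_t^i$ with vanishing conditional mean. Hence the cross terms disappear and
\begin{align*}
\E\Big[\big|b(X_t^i,\tilde\mu_t^{N,i})-b(X_t^i,\mu_t^i)\big|^2\,\Big|\,\mathscr G_t^i\Big]=\frac{1}{(N-1)^2}\sum_{j=1:j\ne i}^N\E\big[(Y_t^{i,j})^2\,\big|\,\mathscr G_t^i\big]=\frac{1}{N-1}\E\big[(Y_t^{i,j_0})^2\,\big|\,\mathscr G_t^i\big].
\end{align*}
Taking expectations and bounding the conditional variance by the conditional second moment yields
\begin{align*}
\E\big|b(X_t^i,\mu_t^i)-b(X_t^i,\tilde\mu_t^{N,i})\big|^2\le\frac{1}{N-1}\,\E\,b_0(X_t^i-X_t^{j_0})^2.
\end{align*}

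Finally, Lipschitz continuity of $b_0$ gives $|b_0(z)|\le|b_0(0)|+L_{b_0}|z|$, hence $b_0(X_t^i-X_t^{j_0})^2\le 2|b_0(0)|^2+4L_{b_0}^2(|X_t^i|^2+|X_t^{j_0}|^2)$; since $X_t^i$ and $X_t^{j_0}$ share the same law, this leads to $\E\,b_0(X_t^i-X_t^{j_0})^2\le 2|b_0(0)|^2+8L_{b_0}^2\,\E|X_t^i|^2$, and \eqref{E10} follows with, e.g., $C_0=2\big(2|b_0(0)|^2\vee 8L_{b_0}^2\big)$ after using $(N-1)^{-1}\le 2N^{-1}$ for $N\ge2$. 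The ``in particular'' claim is then immediate: Cauchy--Schwarz gives $\E|b(X_t^i,\mu_t^i)-b(X_t^i,\tilde\mu_t^{N,i})|\le(C_0/N)^{1/2}\big(1+\E|X_t^i|^2\big)^{1/2}\le\frac{\sqrt{C_0}}{\sqrt N}\big(1+(\E|X_t^i|^2)^{1/2}\big)$, and taking the supremum over $t\ge0$ (the right-hand side being independent of $i$) produces the stated $\varphi$, which tends to $0$ as $N\to\infty$ under the uniform-in-time second moment bound. I do not anticipate a genuine obstacle; the only delicate point is the measure-theoretic assertion that $(X_t^j)_{j\ne i}$ remains conditionally i.i.d.\ once $X_t^i$ is added to the conditioning $\sigma$-field — precisely where conditional independence, rather than mere exchangeability, is used.
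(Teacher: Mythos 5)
Your proposal is correct and follows essentially the same route as the paper: you represent $b(X_t^i,\mu_t^i)$ as the conditional expectation of $b_0(X_t^i-X_t^{j})$ given $(X_t^i,\mathscr F_t^W)$, kill the cross terms via conditional independence (the paper does this with the tower property applied to the centered variables $\Psi_t^{ij}$, you via the conditional variance of a conditionally i.i.d.\ average — the same mechanism), and then conclude with the Lipschitz bound on $b_0$, identical distribution, and Cauchy--Schwarz for the ``in particular'' part. The measure-theoretic point you flag (that $(X_t^j)_{j\neq i}$ stays conditionally i.i.d.\ after adding $X_t^i$ to the conditioning) is exactly what the paper also relies on, and it holds by the assumed conditional independence of the whole family given $\mathscr F_t^W$.
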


\begin{proof}
Obviously, the Assumption $({\bf H}_{b,2})$ is available provided that \eqref{E10} is attainable plus the validity of $\sup_{t\ge0}(\E|X_t^i|^2)^{{1}/{2}}<\8.$

Below, let $(X^i_t)_{1\le i\le N}$ be conditionally independent and identically distributed    under the filtration $\mathscr F_t^W$  and $\mu_t^i=\mathscr L_{X_t^i|\mathscr F_t^W}.$   Since
\begin{equation*}
b(X_t^i,\mu_t^i)=\frac{1}{N-1 }\sum_{j=1:j\neq i}^N\E\big( b_0( X_t^{i}- X_t^{j}) \big| X_t^i,\mathscr F_t^W\big),
\end{equation*}
we thus obtain that
\begin{align*}
\E|b(X^i_t,\mu_t^i)-b(X^i_t,\tilde \mu^{N,i})|^2=\frac{1}{(N-1)^2}\bigg(\sum_{j=1:j\neq i}^N\E|\Psi^{ij}_t|^2+\sum_{j,k=1:j,k\neq i, j\neq k}^N\E\big( \Psi^{ij}_t \Psi^{ik}_t\big) \bigg),
\end{align*}
where
\begin{align*}
\Psi^{ij}_t:=\E\big( b_0( X_t^{i}- X_t^{j}) \big| X_t^i,\mathscr F_t^0\big)-b_0( X_t^{i}- X_t^{j}).
\end{align*}
Notice that for any  $j,k\neq i$ and $j\neq k,$
\begin{align*}
 \E\big(\Psi^{ij}_t \Psi^{ik}_t\big)=\E\big(\E\big(\Psi^{ij}_t \Psi^{ik}_t\big| X_t^i,\mathscr F_t^0\big)\big)=\E\big(\E\big( \E(\Psi^{ij}_t| X_t^i,\mathscr F_t^0) \E(\Psi^{ik}_t| X_t^i,\mathscr F_t^0) \big)\big)=0
\end{align*}
by taking the conditional independency under $\mathscr F_t^W$ of the sequence $(X_t^i)_{0\le t\le N}$ into consideration.
Subsequently,
we derive   that
\begin{align*}
\E|b(X^i_t,\mu_t^i)-b(X^i_t,\tilde \mu^{N,i})|^2
&\le \frac{2}{(N-1)^2}\sum_{j=1:j\neq i}^N\E  \big|b_0( X_t^{i}- X_t^{j})\big|^2 \\
&\le  \frac{C_0}{(N-1)^2} \sum_{j=1 }^N\big(    \E| X_t^{j}|^2 +|b_0( 0 )|^2\big),
\end{align*}
where in the second inequality
 we utilized the Lipschitz property of $b_0$ and
the fact that $X_t^i$ and $X_t^j$ are identically distributed  given $\mathscr F_t^W$. Finally, \eqref{E10} follows directly by using again that $X_t^i$ and $X_t^j$ share the same law.
\end{proof}

\noindent {\bf Acknowledgements.}\,\,  The research of Jianhai Bao is supported by the National Key R\&D Program of China (2022YFA1006004) and the National Natural Science Foundation of China (No. 12071340).
The research of Jian Wang is supported by the National Key R\&D Program of China (2022YFA1006003) and the National Natural Science Foundations of China (Nos. 12071076 and 12225104).

\end{document}